\newtheorem{thm}{Theorem}[section]
\newtheorem{lem}[thm]{Lemma}
\numberwithin{equation}{section}
\newtheorem{exam}{Example}[section]
\newcommand{\rom}[1]{\expandafter\@slowromancap\romannumeral #1@}
\begin{document}

\title{Analysis of PML Method for Stochastic Convected Helmholtz Equation}

\author{Sang-Hyeon Park}


\address{National Institute for Mathematical Sciences, Jeonmin-dong 463-1, Yuseong-gu, 305-811 Daejeon , Republic of Korea}

\author{Imbo Sim}


\address{National Institute for Mathematical Sciences, Jeonmin-dong 463-1, Yuseong-gu, 305-811 Daejeon , Republic of Korea}
\email{imbosim@nims.re.kr}
\thanks{Corresponding author : Imbo Sim}
\subjclass{Primary 35R60; Secondary 60H15}

\keywords{
convected Helmholtz equation, perfectly matched layer, discretized white noise, stochastic convergence, SPDE
}

\date{January 1, 2004}

\begin{abstract}
We propose and analyze the perfectly matched layer (PML) method for the time-harmonic acoustic waves driven by the white noise source in the presence of the uniform flow. A PML is an artificial absorbing layer commonly used to truncate computational regions to solve problems in unbounded domains. We study a modification of PML method based on B\'ecache et. al. \cite{BBL2004}. A truncated domain problem for stochastic convected Helmholtz equation in the infinite duct is constructed by applying PMLs. Our PML method omits the instability of inverse upstream modes in the PML. Moreover, a suitable jump condition on boundaries between computational domain and PMLs is not required. We analyze the stochastic error generated by truncations of the domain. Thus the convergence analysis of the solution is provided in the sense of mean-square.

\end{abstract}



\maketitle

\section{Introduction}

We study the stochastic convected Helmholtz equation with the white noise source in the infinite duct. Let the duct be $\Omega=\{x=(x_1,x_2)|\;x_1\in\mathds{R}, x_2\in(0,d)\}\subset\mathds{R}^2$, where $d$ is a positive constant. The acoustic pressure field $p(x)$ in the presence of a uniform flow satisfies the convected Helmholtz equation in $\Omega$:
\begin{equation}\label{oe1}
(1-M^2)\frac{\partial^2 p}{\partial x^2_1}+\frac{\partial^2 p}{\partial x^2_2}+2ikM\frac{\partial p}{\partial x_1}+k^2p=f+1_{\Omega_f}\dot{W}\;\;\;\;\;\; \text{in } \Omega,
\end{equation}
where the Mach number $M=\frac{v_0}{c_0}$, the wave number $k=\frac{\omega}{c_0}$, and $c_0$ sound velocity in the fluid. In addition, the pressure satisfies the homogeneous Neumann boundary condition on the wall,
\begin{equation}\label{oe2}
\frac{\partial p}{\partial x_2}=0\;\;\;\text{on }\partial\Omega.
\end{equation}
We assume that $0\leq M<1$. The right-hand side of (\ref{oe1}) consists of deterministic term $f$ and stochastic term $\dot{W}(x)$. The stochastic term $\dot{W}(x)$ is the formal derivative of the white noise in space \cite{W1986}. We suppose that the right-hand side of (\ref{oe1}) has a compact support $\Omega_f$ in $\Omega$ i.e. $\Omega_f=\mathrm{supp}(f)$ and $\Omega_f\subset \Omega$. Here, $f\in L^2(\Omega)$ is assumed. The function $1_{\Omega_f}(x)$ is an indicator function.

Convected Helmholtz equation describes the time-harmonic acoustic waves generated by moving media. In the presence of a uniform flow, time-harmonic linearized Euler equations reduce to the convected Helmholtz equation for the pressure. The presence of a mean flow causes the mathematical treatment of the problem much more difficult, since phase and group velocities of the acoustic wave could have opposite signs \cite{BGH2010, DJ2006, HHT2003}.

The main goal of this paper is to construct a modified perfectly matched layer(PML) method for the problem (\ref{oe1}) and to analyze a suitable error estimation in the sense of $E[\|\cdot\|^2_{L^2}]$.

Perfectly matched layer was introduced by B\'erenger in \cite{B1994} for the time-dependent Maxwell equations. To design an efficient absorbing boundary condition for the time harmonic acoustic waves in unbounded domains, the PML method is usually used. In the presence of a flow, compared to the classical waves, PMLs could cause instabilities of the solution. The presence of instabilities have been studied in \cite{BFJ2003} by using group and phase velocities. In \cite{BBL2004}, B\'ecache et. al. studied a PML formulation for the convected Helmholtz equation in a duct to avoid instabilities of the solution. They used a substitution $\frac{\partial}{\partial x_1}\rightarrow \alpha\frac{\partial}{\partial x_1}+i\lambda$. In \cite{BBL2004}, $\lambda$ is chosen as a constant $\frac{-kM}{1-M^2}$ in the PMLs. The complex-valued function $\alpha$ satisfies $\mathrm{Re}(\alpha)>0$ and $\mathrm{Im}(\alpha)<0$ in PMLs. This approach provides a successful analysis of error estimation under a suitable jump condition between the computational domain and PMLs.

According to \cite{ANZ1998,DG2001,DT2002}, regularity estimates for SPDEs are usually weak which leads to low order error estimates. To overcome this complication, authors in \cite{ANZ1998, CYY2007, CZZ2008,DG2001, DT2002} considered the discretized white noise instead of $\dot{W}$, since it is more regular. Especially, Cao et. al. \cite{CZZ2008} established an error estimation of the PML method for Helmholtz equation by applying the discretized white noise. We also utilize the discretized white noise to obtain the desired regularity for the error estimation as in \cite{CZZ2008}. Replacing the white noise of the forcing term with a discretized version, an approximated problem is built as follows.
\begin{equation}\begin{split}\label{123}
&(1-M^2)\frac{\partial^2 p^{h}}{\partial x^2_1}+\frac{\partial^2 p^{h}}{\partial x^2_2}+2ikM\frac{\partial p^{h}}{\partial x_1}+k^2p^{h}=f(x)+\dot{W}^{h}(x)\;\;\;\;\;\; \text{in }\Omega,\\
&\frac{\partial p^{h}}{\partial x_2}=0\;\;\;\;\;\; \text{on }\partial\Omega,
\end{split}\end{equation}
where $\dot{W}^{h}(x)$ is the discretized white noise. Since the problem (\ref{123}) is more regular than the problem (\ref{oe1}), the convergence analysis can be carried out for the PML method as in \cite{CZZ2008}.

In this setting (\ref{123}), we study a modified PML method instead of that in \cite{BBL2004}. The modified PML formulation considered in this paper is described in the following. The left-hand side of the problem (\ref{123}) is reformulated by
\begin{equation*}\begin{split}
&(1-M^2)(\frac{\partial }{\partial x_1}+\frac{Mki}{1-M^2})^2p^{h}+\frac{\partial^2 p^{h}}{\partial x^2_2}+\frac{k^2}{1-M^2}p^{h}.
\end{split}\end{equation*}
Then a substitution
\begin{equation*}
\frac{\partial }{\partial x_1}+\frac{Mki}{1-M^2} \rightarrow \alpha(x_1)(\frac{\partial }{\partial x_1}+\frac{Mki}{1-M^2})
\end{equation*}
is applied to construct a PML problem \cite{S2012}. Here, the complex function $\alpha(x_1)$ is given by
\begin{equation*}\begin{split}
\alpha(x_1)=\frac{-i\omega}{-i\omega+\sigma(x_1)},
\end{split}\end{equation*}
where $\omega>0$ and $\sigma(x_1)$ is a positive real-valued function in $\mathcal{C}^{1}(\mathds{R})$ such that $\sigma(x_1)=0$ in the computational domain. Therefore, $p^{h}$ is continuously connected on the boundary between the computational domain and PMLs. In this reason, this PML does not require a jump condition. It also follows from the modal expansion approach in \cite{BBL2004} that a PML-truncation error of this model.

The outline of this paper is as follows. In Section 2, we discuss the discretized white noise. The error estimation $E[\|p-p^{h}\|^{2}_{L^2}]$ and convergence analysis are provided.  In Section 3, we discuss about a modified PML method and the convergence of the problem as the length of layer goes to infinity. In Section 3.1, problems are restricted in a bounded domain $\Omega_b$ in order to analyze an error generated by PML . In section 3.2, we derive $\|\cdot\|_{H^{1}}$-error of the PML model. Combining two error estimations in section 2 and section 3.2, we finally establish the total error analysis in the sense of $E[\|\cdot\|^2_{L^2}]$.

\section{Approximation driven by the Discretized White Noise}

In this section, we derive the approximated problem (\ref{123}) from (\ref{oe1}). This approximation provides more regular solution than the solution of (\ref{oe1}). Let $\mathcal{T}$ be a trianglation $\bigcup^{N}_{i=1}K_i\subset \Omega_f$. Each element $K_i$ has at most one curved edge aligned with $\Omega$. Let $h_i=\mathrm{diam}\; K_i$, $h=\mathrm{max}_{i}\;h_i$ and $r_i$ be the radius of the largest ball inscribed $K_i$. We assume that $\mathcal{T}$ is quasi-uniform. The random variable $\xi_i$ is define by
\begin{equation}
\xi_i:=\frac{1}{\sqrt{|K_i|}}\int_{K_i}dW(x),\;\; 1\leq i\leq N.
\end{equation}
Here, $|K_i|$ denotes the area of $K_i$. According to \cite{W1986}, the family of random variables $\{\xi_i\}$ is independent identically distributed (i.i.d.) normal random variables with mean 0 and variance 1. The piecewise constant approximation $\dot{W}^{h}(x)\in L^{2}(\Omega)$ is defined by
\begin{equation}
\dot{W}^{h}(x)=\sum^{N}_{i=1}\frac{1}{\sqrt{|K_i|}}\xi_i 1_{K_i}(x)
\end{equation}
which has following properties:
\begin{equation*}\begin{split}
E[\dot{W}^{h}(x)\dot{W}^{h}(y)]&=\frac{1}{|K_i|},\;\; \text{ for $x,y\in K_i$},\\
E[\dot{W}^{h}(x)\dot{W}^{h}(y)]&=0,\;\;\;\;\;\;\;\;\; \text{for $x\in K_i$ and $y\in K_j$, $i\neq j$}.
\end{split}\end{equation*}
Therefore, the problem (\ref{oe1}) can be reformulated by replacing $\dot{W}$ with $\dot{W}^{h}$. Consider the following approximated problem
\begin{equation}\begin{split}\label{oed1}
&(1-M^2)\frac{\partial^2 p^{h}}{\partial x^2_1}+\frac{\partial^2 p^{h}}{\partial x^2_2}+2ikM\frac{\partial p^{h}}{\partial x_1}+k^2p^{h}=f+\dot{W}^{h}\;\;\;\;\;\; \text{in }\Omega,\\
&\frac{\partial p^{h}}{\partial x_2}=0\;\;\;\;\;\; \text{on }\partial\Omega.
\end{split}\end{equation}
The problem (\ref{oed1}) is more regular than the problem (\ref{oe1}), since $\dot{W}^{h}(x)\in L^{2}(\Omega)$. Moreover, a weak form of the problem (\ref{oed1}) is
\begin{equation}\begin{split}\label{woed1}
a_{\Omega}(p^{h},q)=-\int_{\Omega}(f+\dot{W}^{h})\bar{q}\;dx,\;\;\;\;\; \forall q\in H^{1}(\Omega),
\end{split}\end{equation}
where the sesquilinear form $a_{\Omega}(\cdot,\cdot)$ is
\begin{equation}\begin{split}\label{uw1}
a_{\Omega}(p,q)=\int_{\Omega}(\mathcal{M}\;\nabla p)\cdot\nabla\bar{q}\;dx+\int_{\Omega}k^2 p\bar{q}\;dx-2ik\int_{\Omega}\nabla\cdot \left(\begin{array}{c} Mp\\0 \\\end{array}\right)\bar{q}\;dx.
\end{split}\end{equation}
where $\mathcal{M}=$
$\left(\begin{array}{cc}
1-M^2 & 0\\ 0 & 1\\
\end{array}\right)$.

Now, we prove that the solution $p^h$ of (\ref{oed1}) converges to $p$ in a given bounded domain $\Omega_b\in\Omega$. Using the stochastic integration formula in \cite{BS1994} and \cite{W1986}, the mean-square error $E[\|p-p^{h}\|^2_{L^2(\Omega_b)}]$ is derived in the following. Let $G(x,y)$ be the Green function of the convected Helmholtz equation (\ref{oed1}). We denote $H^{(1)}_0(x)$ is a Hankel function of the first kind of order $0$. By the method of images(\cite{K1999}), for a fixed $y\in\Omega$, the Green function $G(x,y)$ is written by
\begin{equation}\begin{split}\label{gf}
G(x,y)=&\Phi(x,y)+\Phi(x,-y)\\
&+\sum^{\infty}_{n=1}\Big(\Phi(x,e^{+}_{n}-y)+\Phi(x,e^{+}_{n}+y)+\Phi(x,e^{-}_{n}-y)+\Phi(x,e^{-}_{n}+y)\Big),
\end{split}\end{equation}
where
\begin{equation}\begin{split}\label{g1}
&\Phi(x,y)=\frac{i}{4\sqrt{1-M^2}}H^{(1)}_0(k\rho(x-y))e^{i\frac{M}{1-M^2}(x_1-y_1)},\\
&\rho(x)=\frac{\sqrt{x^2_1+(1-M^2)x^2_2}}{1-M^2},\\
&e^{\pm}_{n}=(0,\pm 2dn).
\end{split}\end{equation}
Accrording to \cite{BS1994} and \cite{W1986}, $p$ and $p^{h}$ can be written by
\begin{equation}\begin{split}\label{gs1}
p(x)=&\int_{\Omega}G(x,y)f(y)\;dy+\int_{\Omega}G(x,y)1_{\Omega_f}(y)\;dW(y),\\
p^{h}(x)=&\int_{\Omega}G(x,y)f(y)\;dy+\int_{\Omega}G(x,y)1_{\Omega_f}(y)\;dW^{h}(y).
\end{split}\end{equation}
To analyze the error with respect to $p$ and $p^{h}$, we start with following two lemmas.

\begin{lem}\label{lem1}
There exists a Lipschitz continuous function $\tilde{V}(x,y)$ with respect to $x$ and $y$ such that $\Phi(x,y)$ is decomposed by
\begin{equation}\begin{split}
\Phi(x,y)=\frac{1}{2\pi\sqrt{1-M^2}}\ln\frac{1}{k\rho(x-y)}e^{-i\frac{M}{1-M^2}k(x_1-y_1)}+\tilde{V}(x,y).
\end{split}\end{equation}
\end{lem}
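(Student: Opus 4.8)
The plan is to isolate the logarithmic singularity of the Hankel function $H^{(1)}_0$ at the origin and to absorb everything else into a Lipschitz remainder. The only non-smooth feature of $\Phi(x,y)$ is carried by $H^{(1)}_0(k\rho(x-y))$ as $x\to y$, since the prefactor $\frac{i}{4\sqrt{1-M^2}}$ is constant and the exponential factor is smooth and of modulus one. Hence I would begin from the classical small-argument expansion
\begin{equation*}
H^{(1)}_0(z)=J_0(z)+\frac{2i}{\pi}\Big(\ln\frac{z}{2}+\gamma\Big)J_0(z)+\frac{2i}{\pi}S(z),
\end{equation*}
where $\gamma$ is the Euler--Mascheroni constant and $S(z)=\sum_{m\ge1}\frac{(-1)^{m+1}H_m}{(m!)^2}(z/2)^{2m}$ is an even entire function with $S(0)=0$. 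This exhibits $\frac{2i}{\pi}\ln z$ as the precise singular part, and the coefficient then works out exactly: $\frac{i}{4\sqrt{1-M^2}}\cdot\frac{2i}{\pi}=-\frac{1}{2\pi\sqrt{1-M^2}}$, so that $\frac{i}{4\sqrt{1-M^2}}\,\frac{2i}{\pi}\ln\!\big(k\rho(x-y)\big)=\frac{1}{2\pi\sqrt{1-M^2}}\ln\frac{1}{k\rho(x-y)}$, reproducing (times the exponential factor) the stated singular term.

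Next I would set $R(z):=H^{(1)}_0(z)-\frac{2i}{\pi}\ln z$ and show $R$ is Lipschitz on $[0,\infty)$. Grouping terms gives
\begin{equation*}
R(z)=J_0(z)+\frac{2i}{\pi}(\ln z)\big(J_0(z)-1\big)+\frac{2i}{\pi}(\gamma-\ln2)J_0(z)+\frac{2i}{\pi}S(z);
\end{equation*}
the only delicate piece is $(\ln z)(J_0(z)-1)$, which is $O(z^2\ln z)$ since $J_0(z)-1=-z^2/4+O(z^4)$. Using $\frac{d}{dz}H^{(1)}_0=-H^{(1)}_1$ together with $H^{(1)}_1(z)\sim -\frac{2i}{\pi z}$ near $0$, one checks that $R'(z)=-H^{(1)}_1(z)-\frac{2i}{\pi z}$ has cancelling singular parts and stays bounded as $z\to0^+$; for large $z$, $H^{(1)}_1$ decays while $\frac{2i}{\pi z}\to0$, so $R'$ is bounded on all of $[0,\infty)$ and $R$ is Lipschitz, in particular on every bounded set.

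Finally, defining the remainder $\tilde V(x,y):=\frac{i}{4\sqrt{1-M^2}}\,R\!\big(k\rho(x-y)\big)\,e^{i\frac{M}{1-M^2}(x_1-y_1)}$, I would argue its Lipschitz continuity by composition. The weighted radial map $\rho(w)=\frac{1}{1-M^2}\sqrt{w_1^2+(1-M^2)w_2^2}$ is a scaled norm, hence $1$-homogeneous and Lipschitz, so $(x,y)\mapsto k\rho(x-y)$ is Lipschitz; composing the Lipschitz function $R$ with it yields a Lipschitz function of $(x,y)$, and multiplication by the smooth unimodular factor preserves Lipschitz continuity on bounded sets. The main obstacle is precisely the behaviour across the diagonal $x=y$: although $\rho(x-y)$ is merely Lipschitz (not $C^1$) there and $\ln\rho$ is singular, the subtraction has been arranged so that $R$ is genuinely Lipschitz up to $z=0$ --- the cancellation of the $1/z$ singularities in $R'$ is the crux --- and ``Lipschitz-in-$z$ composed with Lipschitz-in-$(x,y)$'' is what rescues the estimate. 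I would also remark that any sign or $k$ ambiguity in the unimodular exponential prefactor is immaterial, since such a smooth factor can be absorbed into $\tilde V$ without affecting its Lipschitz continuity.
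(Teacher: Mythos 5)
Your proposal is correct, but it follows a genuinely different route from the paper. The paper's proof is a two-line reduction: it introduces the linear map $T(x)=(\tfrac{x_1}{1-M^2},\tfrac{x_2}{\sqrt{1-M^2}})$, observes that $\rho(x-y)=|T(x)-T(y)|$, and then simply \emph{cites} (from the reference on the stochastic Helmholtz scattering problem, Cao et.\ al.) the known decomposition of the two-dimensional Helmholtz fundamental solution into a $\frac{1}{2\pi}\ln\frac{1}{|\tilde{x}-\tilde{y}|}$ singularity plus a Lipschitz remainder $V(\tilde{x},\tilde{y})$; Lipschitz continuity of $\tilde{V}(x,y)=V(T(x),T(y))e^{-i\frac{M}{1-M^2}k(x_1-y_1)}$ then follows because $T$ is linear and the exponential factor is smooth and unimodular. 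You instead prove that cited decomposition from scratch: the small-argument series for $H^{(1)}_0$, the grouping $R(z)=H^{(1)}_0(z)-\frac{2i}{\pi}\ln z$, and the cancellation of the $1/z$ singularities in $R'(z)=-H^{(1)}_1(z)-\frac{2i}{\pi z}$ showing $R$ is Lipschitz up to $z=0$, followed by composition with the scaled norm $k\rho(x-y)$. What the paper's route buys is brevity and a clean separation of the anisotropy (handled entirely by $T$) from the singularity analysis (delegated to the literature). What your route buys is self-containedness and an explicit verification of the constant $\frac{i}{4\sqrt{1-M^2}}\cdot\frac{2i}{\pi}=-\frac{1}{2\pi\sqrt{1-M^2}}$ --- a worthwhile check, since the paper's quoted identity $iH^{(1)}_0(k|\tilde{x}-\tilde{y}|)=\frac{1}{2\pi}\ln\frac{1}{|\tilde{x}-\tilde{y}|}+V(\tilde{x},\tilde{y})$ is off by the factor $\tfrac14$ (the fundamental solution is $\tfrac{i}{4}H^{(1)}_0$), a typo your computation silently corrects. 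Your closing caveats are also apt: the Lipschitz property of the product is only needed (and in general only holds) on bounded sets, which suffices for Lemma \ref{lem2}, and the sign/$k$ discrepancy between the exponential in the definition of $\Phi$ and the one in the statement of the lemma is immaterial since any smooth unimodular factor can be absorbed into $\tilde{V}$ --- the paper implicitly does exactly this.
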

\begin{proof}
Let us define a linear map $T:\mathds{R}^2\rightarrow\mathds{R}^2$ by $T(x)=(\frac{x_1}{1-M^2},\frac{x_2}{\sqrt{1-M^2}})$. Let $\tilde{x}=T(x)$ and $\tilde{y}=T(y)$. Then it follows from (\ref{g1}) that
\begin{equation}\begin{split}
\rho(x-y)=|\tilde{x}-\tilde{y}|.
\end{split}\end{equation}
It is well-known that there exists a Lipschitz continuous function $V(x,y)$ with respect to $x$ and $y$ such that
\begin{equation}\begin{split}\label{decom1}
iH^{1}_0(k|\tilde{x}-\tilde{y}|)=\frac{1}{2\pi}\ln\frac{1}{|\tilde{x}-\tilde{y}|}+V(\tilde{x},\tilde{y})
\end{split}\end{equation}
(refer to \cite{CZZ2008}). Since the map $T$ is linear, $\tilde{V}(x,y):=V(T(x),T(x))e^{-i\frac{M}{1-M^2}k(x_1-y_1)}$ is still Lipschitz continuous with respect to $x$ and $y$. Consequently, the proof is completed from (\ref{g1}) and (\ref{decom1}).
\end{proof}

From (\ref{gf}) and Lemma \ref{lem1}, the next Lemma is proved.
\begin{lem}\label{lem2}
Let $\Omega_b$ be an open bounded domain in $\mathds{R}^2$. We assume that  $\Omega_f\subset\Omega_b\subset\Omega$. Suppose $y,z\in K_i$ and $h$ is sufficiently small. Then we have
\begin{equation}\begin{split}\label{ien1230}
\int_{\Omega_b}|G(x,y)-G(x,z)|^2 dx\leq C |y-z|^{2-\epsilon},
\end{split}\end{equation}
where $0<\epsilon<1$ and $C$ is a positive constant independent of $\epsilon$.
\end{lem}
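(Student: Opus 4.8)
The plan is to reduce the whole estimate to one model computation for the planar logarithmic kernel, and then to account for the infinitely many image sources in (\ref{gf}). First I would use the representation (\ref{gf}) to write $G(x,y)-G(x,z)$ as a sum of differences $\Phi(x,s(y))-\Phi(x,s(z))$, where $s$ ranges over the reflections $y\mapsto y,\,-y,\,e^{\pm}_{n}\pm y$. Because each such $s$ is an affine isometry (a translation composed with a reflection), we have $s(y)-s(z)=\pm R(y-z)$ with $R$ a reflection, so $|s(y)-s(z)|=|y-z|$ uniformly in the image index; combined with $(a+b)^2\le 2a^2+2b^2$ and Cauchy--Schwarz this makes the task purely one of bounding each piece and summing the bounds. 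For each piece I apply Lemma \ref{lem1} to split $\Phi(x,s(y))$ into its explicit logarithmic singular part with the unimodular exponential prefactor, plus the Lipschitz remainder $\tilde{V}(x,s(y))$.

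The Lipschitz remainder is immediate: since $\tilde{V}$ is Lipschitz in its second argument and $|s(y)-s(z)|=|y-z|$, one gets $\int_{\Omega_b}|\tilde{V}(x,s(y))-\tilde{V}(x,s(z))|^2\,dx\le C|y-z|^2\le C|y-z|^{2-\epsilon}$ once $|y-z|$ is small. The exponential prefactor has modulus one and is Lipschitz in $y$, so when it is peeled off the logarithm the difference of the two prefactors contributes at most $C|y-z|^2\int_{\Omega_b}|\ln\tfrac{1}{k\rho(x-s(y))}|^2\,dx$, which is finite and hence again $O(|y-z|^2)$. Everything therefore reduces to the difference of the bare logarithms.

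For this logarithmic core I would pass to the coordinates $\tilde{x}=T(x)$ of Lemma \ref{lem1}, under which $\rho(x-s(y))=|\tilde{x}-\widetilde{s(y)}|$ and the Jacobian is constant. Writing $a=\widetilde{s(y)}$, $b=\widetilde{s(z)}$ and $\delta=|a-b|\asymp|y-z|$ (the comparison being uniform in $n$ since $T$ has bounded inverse), I must estimate $\int_{B}\big|\ln|\tilde{x}-a|-\ln|\tilde{x}-b|\big|^2\,d\tilde{x}$ over the bounded image $B$ of $\Omega_b$. I split $B$ into the near region $\{|\tilde{x}-a|\le 2\delta\}\cup\{|\tilde{x}-b|\le 2\delta\}$ and its complement. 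On the near region I bound each logarithm separately, using $\int_{|\tilde{x}-a|\le 2\delta}|\ln|\tilde{x}-a||^2\,d\tilde{x}\le C\delta^2|\ln\delta|^2$ in polar coordinates. On the far region the segment $[a,b]$ stays at distance $\gtrsim|\tilde{x}-a|$, so the mean value theorem gives $\big|\ln|\tilde{x}-a|-\ln|\tilde{x}-b|\big|\le C\delta/|\tilde{x}-a|$, and integrating produces $C\delta^2|\ln\delta|$. Together these yield $C\delta^2|\ln\delta|^2$, and since $t^2|\ln t|^2\le C_\epsilon\, t^{2-\epsilon}$ as $t\to 0$, the direct term and any image whose source approaches $\Omega_b$ are controlled by $C|y-z|^{2-\epsilon}$. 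This is exactly the stated rate, and the logarithmic factor is the reason one loses the full power $2$ and must settle for $2-\epsilon$.

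The main obstacle is the infinite image sum in (\ref{gf}). Only finitely many sources $s(y)$ can lie within a fixed distance of $\overline{\Omega_b}$, and for those the logarithmic estimate above applies. For the remaining images the sources recede with $n$, so $k\rho(x-s(y))$ stays bounded below, $\Phi$ is analytic there, and each term obeys a Lipschitz bound $C_n|y-z|$ with $C_n$ governed by the decay of $H^{(1)}_0$ and $H^{(1)}_1$. The delicate point is that these term-by-term constants decay only like $n^{-1/2}$ and cannot be summed naively; to make $\sum_n$ converge I would exploit the oscillation of the Hankel functions together with their $O((k\rho)^{-1/2})$ amplitude, i.e. the same conditional convergence that makes (\ref{gf}) meaningful, or alternatively invoke the exponentially decaying evanescent modal expansion of $G$ away from the source. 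Handling this far-field image series, rather than the local logarithmic singularity, is where the real care is required.
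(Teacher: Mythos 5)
Your overall architecture coincides with the paper's appendix proof: decompose $G(x,y)-G(x,z)$ through the image representation (\ref{gf}), split each $\Phi$ by Lemma \ref{lem1} into the logarithmic kernel plus the Lipschitz remainder, dispose of the remainder by its Lipschitz bound, and treat the direct term and the far images separately. For the logarithmic core your route is genuinely different: you do a near/far splitting around the two singularities and use the mean value theorem in the far zone, arriving at $C\delta^{2}|\ln\delta|^{2}$ and then $C_{\epsilon}\delta^{2-\epsilon}$, whereas the paper writes $|\ln\rho_1-\ln\rho_2|^{2}$ as $|\ln\rho_1-\ln\rho_2|^{\epsilon}\,|\rho_1-\rho_2|^{2-\epsilon}\bigl(\int_0^1\frac{d\theta}{\theta\rho_1+(1-\theta)\rho_2}\bigr)^{2-\epsilon}$ and applies H\"older with exponents $3/\epsilon$ and $3/(3-\epsilon)$, see (\ref{eq001})--(\ref{sep3}). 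Both are valid; yours is sharper and more transparent, and your peeling-off of the unimodular prefactors is more careful than the paper, which silently drops them. (Neither argument, incidentally, yields a constant truly independent of $\epsilon$: your $C_\epsilon$ and the paper's integral $\int_0^{\tilde C}r^{-(3-2\epsilon)/(3-\epsilon)}dr\approx 3/\epsilon$ both blow up as $\epsilon\to 0$, as they must, since the true rate carries the factor $|\ln\delta|^2$.)

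The one genuine gap in your proposal is the far-image series, which you leave as a sketch ("exploit the oscillation", "or invoke the modal expansion") — but you should know that your diagnosis of the difficulty is exactly right, and it is precisely where the paper's own proof breaks down. In (\ref{i12}) the paper bounds $|\Phi(x,e^+_n+y)-\Phi(x,e^+_n+z)|$ by the difference of the Hankel \emph{amplitudes} $|\rho_1^{-1/2}-\rho_2^{-1/2}|$ alone, using (\ref{hanasymp}); this discards the differing prefactors $e^{-i\frac{M}{1-M^2}y_1}$ versus $e^{-i\frac{M}{1-M^2}z_1}$ and, fatally, the phase difference $e^{ik\rho_1}-e^{ik\rho_2}$. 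That phase difference does not vanish as $n\to\infty$: with $\rho_1=\rho(x-y-e^+_n)$, $\rho_2=\rho(x-z-e^+_n)$ one computes $\rho_1-\rho_2\to(z_2-y_2)/\sqrt{1-M^2}$, so each term is genuinely of size $\sim|y-z|/\sqrt{n}$ — your $n^{-1/2}$ — and the series of absolute values in (\ref{i13}) diverges; the claimed $C|y-z|/(n^3d^3)$ decay is false. Consequently the sum must be controlled non-absolutely, along one of the two lines you name: Abel summation against the phases $e^{ik\rho_n}$, $k\rho_n\approx 2kdn/\sqrt{1-M^2}+O(1)$, whose step $e^{2ikd/\sqrt{1-M^2}}$ is a nontrivial rotation exactly under the non-resonance condition (\ref{assumptionk}) (which the lemma therefore implicitly requires), or the duct-mode expansion of $G$, in which all modes beyond the first $N_0$ decay exponentially in $n$ off the diagonal, the diagonal singularity being handled by your logarithmic estimate. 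Until one of these is executed, neither your proposal nor the paper's appendix is a complete proof of (\ref{ien1230}); carrying it out would not merely fill your gap but repair the paper's argument as well.
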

\begin{proof}
Refer to the proof in the appendix.
\end{proof}

Applying Lemma \ref{lem2}, we are now in position to obtain the error estimation $E[\|p-p^{h}\|^2_{L^2(\Omega_b)}]$.
\begin{thm}\label{thm1}
 Let $p$ and $p^{h}$ are the solution of (\ref{oe1}), (\ref{oe2}), and (\ref{oed1}), respectively. Let $\Omega_b$ be an open bounded domain in $\mathds{R}^2$. We assume that  $\Omega_f\subset\Omega_b\subset\Omega$. Suppose $y,z\in K_i$ and $h$ is sufficiently small. Then we obtain
\begin{equation}\begin{split}\label{finalerror1}
E[\|p-&p^{h}\|^2_{L^2(\Omega_b)}]\leq C h^{2-\epsilon},
\end{split}\end{equation}
where $0<\epsilon<1$ and $C$ is a positive constant independent of $\epsilon$ and $h$.
\end{thm}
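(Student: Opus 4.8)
The plan is to reduce the mean-square error to a purely spatial $L^2$ estimate on the Green function via the white-noise isometry, and then invoke Lemma \ref{lem2}. First I would subtract the two representations in (\ref{gs1}). The deterministic parts cancel, leaving
\begin{equation*}
p(x)-p^{h}(x)=\int_{\Omega}G(x,y)1_{\Omega_f}(y)\,dW(y)-\int_{\Omega}G(x,y)1_{\Omega_f}(y)\,dW^{h}(y).
\end{equation*}
The key observation is that the second integral, after unfolding the definitions of $\dot{W}^{h}$ and $\xi_i$, replaces $G(x,\cdot)$ on each element $K_i$ by its cell average $\bar{G}_i(x):=\frac{1}{|K_i|}\int_{K_i}G(x,z)\,dz$. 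Since $\Omega_f=\bigcup_i K_i$, this yields the telescoped expression
\begin{equation*}
p(x)-p^{h}(x)=\sum_{i=1}^{N}\int_{K_i}\big(G(x,y)-\bar{G}_i(x)\big)\,dW(y).
\end{equation*}

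Next I would take expectations. Because the white noise assigns independent increments to the disjoint elements $K_i$, the It\^o (white-noise) isometry gives, for each fixed $x$,
\begin{equation*}
E\big[|p(x)-p^{h}(x)|^2\big]=\sum_{i=1}^{N}\int_{K_i}|G(x,y)-\bar{G}_i(x)|^2\,dy.
\end{equation*}
Integrating over $x\in\Omega_b$ and applying Fubini converts the left-hand side into $E[\|p-p^{h}\|^2_{L^2(\Omega_b)}]$ and moves the $x$-integral inside the sum.

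The remaining step is to control $\int_{\Omega_b}|G(x,y)-\bar{G}_i(x)|^2\,dx$. Writing $G(x,y)-\bar{G}_i(x)=\frac{1}{|K_i|}\int_{K_i}(G(x,y)-G(x,z))\,dz$ and applying Jensen's (equivalently Cauchy--Schwarz) inequality bounds this by the $z$-average over $K_i$ of $\int_{\Omega_b}|G(x,y)-G(x,z)|^2\,dx$. This is exactly where Lemma \ref{lem2} enters: for $y,z\in K_i$ the inner integral is at most $C|y-z|^{2-\epsilon}\leq C h^{2-\epsilon}$, using $|y-z|\leq\operatorname{diam}K_i\leq h$. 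Summing over $i$ then produces a factor $\sum_i|K_i|=|\Omega_f|$, which is finite, while the $h^{2-\epsilon}$ factor survives, giving (\ref{finalerror1}).

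I expect the main subtlety to lie not in any single estimate but in the bookkeeping of the stochastic term: one must verify carefully that integration against the discretized noise $dW^{h}$ coincides with replacing the kernel by its piecewise constant cell averages, so that the difference collapses into a single white-noise integral of $G-\bar{G}_i$ over each cell. Once that telescoping is in place, the isometry converts the probabilistic estimate into a deterministic one, and Lemma \ref{lem2} does the heavy lifting; the summation over elements is then routine because the quasi-uniform triangulation covers the fixed compact set $\Omega_f$.
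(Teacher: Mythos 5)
Your proposal is correct and takes essentially the same route as the paper's own proof: Green-function representation with cancellation of the deterministic part, interpretation of the $dW^{h}$ integral as replacing $G(x,\cdot)$ by its cell averages, the It\^o isometry, Jensen's (Cauchy--Schwarz) inequality to pull the average out, and Lemma \ref{lem2} together with $|y-z|\leq h$ and summation over elements. Your bookkeeping is in fact cleaner than the paper's displayed computation, whose double sum over $i,j$ is evidently a typo for the diagonal single sum that you write.
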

\begin{proof}
By (\ref{gs1}) and It\^o isometry, it follows that
\begin{equation}\begin{split}
E[\|p-&p^{h}\|^2_{L^2(\Omega_b)}]=E[\|\int_{\Omega_f}G(x,y)dW(y)-G(x,y)dW^{h}(y)\|^{2}_{L^2(\Omega_b)}]\\
&=E[\int_{\Omega_b}|\int_{\Omega_f}G(x,y)dW(y)-G(x,y)dW^{h}(y)|^{2}dx]\\
&=E[\int_{\Omega_b}|\sum^{N}_{i=1}\int_{K_i}G(x,y)dW(y)-\|K_i\|^{-1}\sum^{n_1}_{i=1}\int_{K_i}G(x,z)dz\int_{K_i}dW^{}(y))|^{2}dx]\\
&=E[\int_{\Omega_b}|\sum^{N}_{i=1}\sum^{N}_{j=1}\int_{K_i}\int_{K_j}\|K_i\|^{-1}(G(x,y)-G(x,z))dz\;dW(y)|^{2}dx]\\
&\leq C\sum^{N}_{i=1}\sum^{N}_{j=1}\int_{K_i}\int_{K_j}\int_{\Omega_b}|G(x,y)-G(x,z)|^2\;dxdzdy,
\end{split}\end{equation}
where positive constant $C$ depending on $\sup_{i}|K_i|$. Thus, Lemma \ref{lem2} leads to the completeness of the proof
\end{proof}

Theorem \ref{thm1} implies that $p^{h}$ converges to $p$ as $h\rightarrow 0$ in a bounded domain $\Omega_b$. In Section 3, $\Omega_b$ will be choosen as a computational domain.

\section{PML model for convected Helmholtz equation}

Let us study the modified PML method which is described in Section 1. We first prove that the solution of PML model converges to the solution of (\ref{woed1}) in $\Omega_b$. Then, it follows from combining with the result in Theorem \ref{thm1} that the stochastic error estimation between solutions of (\ref{oe1}) and PML model.

As in \cite{BBL2004}, the presence of inverse upstream modes should be considered to avoid an unstable solution. The presence of instabilities was studied in \cite{BFJ2003} by using group velocities. We recall the modified PML model as follows. The problem (\ref{oed1}) is reformulated by
\begin{equation}\begin{split}\label{oedreduce2}
&(1-M^2)(\frac{\partial }{\partial x_1}+\frac{Mki}{1-M^2})^2p^{h}+\frac{\partial^2 p^{h}}{\partial x^2_2}+\frac{k^2}{1-M^2}p^{h}=f+\dot{W}^{h}\;\;\;\; \text{in }\Omega.
\end{split}\end{equation}
We now apply the following substitution \cite{S2012}:
\begin{equation}\label{PMLtrans}
\frac{\partial }{\partial x_1}+\frac{Mki}{1-M^2} \rightarrow \alpha(x_1)(\frac{\partial }{\partial x_1}+\frac{Mki}{1-M^2}).
\end{equation}
Here, the complex function $\alpha(x_1)$ is given by
\begin{equation}\begin{split}\label{alpha}
\alpha(x_1)=\frac{-i\omega}{-i\omega+\sigma(x_1)},
\end{split}\end{equation}
where $\omega>0$ and $\sigma(x_1)$ is a positive real-valued function in $\mathcal{C}^{1}(\mathds{R})$.
\begin{exam}\label{exam1}
Let
\begin{equation}
\sigma(x_1)=\Big\{
\begin{array}{l}
0\;\;\;\;\;\;\;\;\;\;\;\;\;\;\;\;\;\;\;\;\;\;\;\;\;\;\;\;\;\text{if $x_1\in(x^{-},x^{+})$,}\\
\sigma_{+}\cdot (x_1-x^{+})^{2}\;\;\;\;\;\;\;\text{if $x_1>x^{+}$,}\\
\sigma_{-}\cdot (x^{-}-x_1)^{2}\;\;\;\;\;\;\;\text{if $x_1<x^{-}$,}\\
\end{array}
\end{equation}
where $\sigma_{\pm}>0$. This type of $\sigma$ is usually applied for acoustic wave problems. Here, constants $\sigma_{\pm}$ are associated with the reflection coefficient. Refer to \cite{C2002, J2012} for details.
\end{exam}
At this point, we have considered infinite PMLs, $\{x\in\Omega|\;x_1>x^{+}\}$ and $\{x\in\Omega|\;x_1<x^{-}\}$. In practice, we need to build bounded layers of the finite length $L$. Thus, let us define two PMLs, $\Omega^{L}_{+}=\{x\in\Omega^{L}|\;x^{+}<x_1<x^{+} + L\}$ and $\Omega^{L}_{-}=\{x\in\Omega^{L}|\;x^{-} - L<x_1<x^{-} \}$. Recall that we defined the computational domain $\Omega_b$ in Section 3. Denote that $\Omega^{L}:=\{x\in\Omega|\; x^{-}-L<x_1<x^{+}+L\}$ is the truncated domain and $\Sigma^{L}_{\pm}:=\{x\in\Omega|\;x_1=x^{\pm}\pm L\}$ is the external boundaries. For simplicity, we consider the Dirichlet boundary conditions on $\Sigma^{L}_{\pm}$. Let $p^{L}$ be the solution which satisfies
\begin{equation}\begin{split}\label{oedreduce3}
&(1-M^2)\left(\alpha(x_1)\frac{\partial }{\partial x_1}+\alpha(x_1)\frac{Mki}{1-M^2}\right)^2p^{L}+\frac{\partial^2 p^{L}}{\partial x^2_2}+\frac{k^2}{1-M^2}p^{L}=f+\dot{W}^{h}\;\;\;\;\;\; \text{in }\Omega_{L},\\
&\frac{\partial p^{L}}{\partial x_2}=0\;\;\;\;\;\; \text{on }\partial\Omega \cap \partial\Omega_{L},\\
&p^{L}=0\;\;\;\; \text{on }\Sigma^{L}_{\pm}.
\end{split}\end{equation}
Let $V_L=\{q\in H^{1}(\Omega_L)|\;q=0\; \mathrm{on}\; \Sigma^{L}_{\pm}\}$. A variational formula of (\ref{oedreduce3}) is written by
\begin{equation}\begin{split}\label{woed2}
a_{\Omega^{L}}(p,q)=-\int_{\Omega^{L}}(f+\dot{W}^{h})\bar{q}\;dx,\;\;\;\;\; \forall q\in V_{L},
\end{split}\end{equation}
where the form $a_{\Omega^{L}}(\cdot,\cdot)$ is
\begin{equation}\begin{split}
a_{\Omega^{L}}(p,q)=b_{\Omega^{L}}(p,q)+c_{\Omega^{L}}(p,q)
\end{split}\end{equation}
with
\begin{equation}\begin{split}
b_{\Omega_{L}}(p,q)&=\int_{\Omega_L}(\mathcal{M}_{\alpha}\;\nabla p)\cdot\nabla\bar{q}dx-\int_{\Omega_L}\frac{1}{\alpha(x_1)}\frac{k^2(1-\alpha^2 M^2)}{1-M^2}p\bar{q}\;dx,\\
c_{\Omega_L}(p,q)&=-2ik\int_{\Omega_L}\alpha(x_1)\nabla\cdot \left(\begin{array}{c} Mp\\0 \\\end{array}\right)\bar{q}\;dx-ikM\int_{\Omega_L}\alpha^{\prime}(x_1)p\bar{q}\;dx,
\end{split}\end{equation}
where $\mathcal{M}_{\alpha}=$
$\left(\begin{array}{cc}
(1-M^2)\alpha & 0\\
0 & \frac{1}{\alpha}\\
\end{array}\right)$.

Now we prove the following theorem.
\begin{thm}\label{thmwell} The problem (\ref{woed2}) is of Fredholm type.
\end{thm}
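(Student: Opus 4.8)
The plan is to show that the operator $A:V_L\to V_L'$ associated with the sesquilinear form $a_{\Omega^L}$ differs from an isomorphism by a compact operator; by the Fredholm alternative this gives the Fredholm property (with index zero). First I would record that every coefficient appearing in $a_{\Omega^L}$ is bounded on the bounded domain $\Omega^L$: since $\sigma\in\mathcal{C}^{1}(\mathds{R})$ and $\Omega^L$ is bounded, both $\alpha(x_1)$ and $\alpha'(x_1)$ lie in $L^\infty(\Omega^L)$, and because $\alpha$ never vanishes (indeed $1/\alpha=1+i\sigma/\omega$) the entries of $\mathcal{M}_\alpha$ are bounded as well. Consequently $|a_{\Omega^L}(p,q)|\le C\|p\|_{H^{1}(\Omega^L)}\|q\|_{H^{1}(\Omega^L)}$, so the form is bounded on $V_L$.

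The central step is a G\aa rding inequality. Writing $\alpha=\frac{\omega^2-i\omega\sigma}{\sigma^2+\omega^2}$, one checks that $\mathrm{Re}(\alpha)=\frac{\omega^2}{\sigma^2+\omega^2}$ and $\mathrm{Re}(1/\alpha)=1$. Since $\sigma$ is bounded on $\Omega^L$, $\mathrm{Re}(\alpha)$ is bounded below there by a positive constant $c_\alpha$, and because $0\le M<1$ the real part of the principal term satisfies
\[
\mathrm{Re}\int_{\Omega^L}(\mathcal{M}_\alpha\,\nabla u)\cdot\nabla\bar{u}\,dx
=\int_{\Omega^L}(1-M^2)\,\mathrm{Re}(\alpha)\,|\partial_{x_1}u|^2+\mathrm{Re}(1/\alpha)\,|\partial_{x_2}u|^2\,dx
\ge c_0\|\nabla u\|^2_{L^2(\Omega^L)},
\]
with $c_0=\min\big((1-M^2)c_\alpha,\,1\big)>0$. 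The remaining contributions to $\mathrm{Re}\,a_{\Omega^L}(u,u)$ are the zeroth-order terms $-\int_{\Omega^L}\frac{1}{\alpha}\frac{k^2(1-\alpha^2M^2)}{1-M^2}|u|^2\,dx$ and $-ikM\int_{\Omega^L}\alpha'|u|^2\,dx$, both bounded by $C\|u\|^2_{L^2}$, together with the first-order term $-2ikM\int_{\Omega^L}\alpha\,\partial_{x_1}u\,\bar{u}\,dx$; the latter I would estimate by Young's inequality as $\varepsilon\|\nabla u\|^2_{L^2}+C_\varepsilon\|u\|^2_{L^2}$ and absorb the gradient part into $c_0\|\nabla u\|^2$ by choosing $\varepsilon$ small. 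This yields
\[
\mathrm{Re}\,a_{\Omega^L}(u,u)\ge \tfrac{c_0}{2}\|\nabla u\|^2_{L^2(\Omega^L)}-C_2\|u\|^2_{L^2(\Omega^L)},\qquad u\in V_L.
\]

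Then I would define $\tilde{a}(u,v):=a_{\Omega^L}(u,v)+\gamma\int_{\Omega^L}u\bar{v}\,dx$ with $\gamma:=C_2+\tfrac{c_0}{2}$. The displayed G\aa rding inequality gives $\mathrm{Re}\,\tilde{a}(u,u)\ge \tfrac{c_0}{2}\|u\|^2_{H^{1}(\Omega^L)}$, so $\tilde{a}$ is coercive and bounded, and by the Lax--Milgram theorem it induces an isomorphism $\tilde{A}:V_L\to V_L'$. Finally, the perturbation $a_{\Omega^L}-\tilde{a}$ is the form $-\gamma\int_{\Omega^L}u\bar{v}\,dx$, whose associated operator factors through the embedding $V_L\hookrightarrow L^2(\Omega^L)$; this embedding is compact by the Rellich--Kondrachov theorem since $\Omega^L$ is bounded, so the perturbation is a compact operator $K$. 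Hence $A=\tilde{A}+K$ is an isomorphism plus a compact operator and is therefore of Fredholm type.

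I expect the main obstacle to be the G\aa rding inequality itself, specifically verifying that the real part of the principal term is genuinely coercive in the gradient despite the complex weight $\mathcal{M}_\alpha$, and controlling the first-order convection term by absorption. The decisive facts are $\mathrm{Re}(1/\alpha)=1$ and the positive lower bound $\mathrm{Re}(\alpha)\ge c_\alpha$, which holds precisely because $\sigma$ is bounded on the truncated layer $\Omega^L$; everything else (boundedness, Lax--Milgram, and compactness of the embedding) is routine once this lower bound is in hand.
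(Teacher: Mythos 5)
Your proof is correct, and its skeleton is the same as the paper's: split the form $a_{\Omega^L}$ into a coercive part that Lax--Milgram turns into an isomorphism plus a perturbation that is compact by the embedding $H^1(\Omega^L)\hookrightarrow L^2(\Omega^L)$, then invoke stability of the Fredholm property under compact perturbations. The difference lies in where the first-order convection term $c_{\Omega^L}(p,q)=-2ikM\int_{\Omega^L}\alpha\,\partial_{x_1}p\,\bar q\,dx-ikM\int_{\Omega^L}\alpha'p\bar q\,dx$ is placed. The paper puts $c_{\Omega^L}$ together with all zeroth-order terms into the compact operator $K_L$, so that the remaining form $s_{\Omega^L}(q,q)$ has real part $\int_{\Omega^L}\bigl((1-M^2)\mathrm{Re}(\alpha)|\partial_{x_1}q|^2+\mathrm{Re}(1/\alpha)|\partial_{x_2}q|^2+|q|^2\bigr)dx$ and is coercive with no absorption argument needed; the price is that compactness of $K_L$ involves a first-order form, which holds because $|c_{\Omega^L}(p,q)|\le C\|p\|_{H^1}\|q\|_{L^2}$ forces the associated operator to factor through the compact embedding on the test-function side --- a duality point the paper asserts in one line without spelling out. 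You instead keep the convection term on the coercive side, paying for it with a Young-inequality absorption to get a G\aa rding inequality, so that your compact perturbation is only the mass term $\gamma\int_{\Omega^L}u\bar v\,dx$, whose compactness is immediate. Your variant thus trades a routine absorption step for a completely elementary compactness step, and it has the minor additional merit of working directly on $V_L$ (respecting the Dirichlet conditions on $\Sigma^L_{\pm}$), whereas the paper's proof is phrased on $H^1(\Omega^L)$. Both arguments rest on the same decisive facts you identified: $\mathrm{Re}(\alpha)=\omega^2/(\sigma^2+\omega^2)$ is bounded below on the bounded layer and $\mathrm{Re}(1/\alpha)=1$.
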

\begin{proof}
The idea of the proof is based on \cite{BBL2004} and \cite{BDLP2002}. First, we check that there exist a bounded operator $K_{L}$ on $H^1(\Omega^{L})$ such that
\begin{equation*}
(K_{L}p,q)_{H^1(\Omega^{L})}=c_{\Omega^{L}}(p,q)-\int_{\Omega_L}(\frac{1}{\alpha(x_1)}\frac{k^2(1-\alpha^2 M^2)}{1-M^2}+1)p\bar{q}\;dx,\;\;\;\;\;\forall\;p,q\in H^1(\Omega^{L}).
\end{equation*}
By the compactness of the embedding of $H^1(\Omega^{L})$ into $L^2(\Omega^{L})$, the operator $C_{L}$ is compact. Let us define $s_{\Omega_{L}}(p,q)=a_{\Omega^{L}}(p,q)-(K_{L}p,q)_{H^1(\Omega^{L})}$. Taking the real part of $s_{\Omega_{L}}(q,q)$, it is derived that
\begin{equation}\begin{split}
\mathrm{Re}(s_{\Omega_{L}}(q,q))=&\int_{\Omega_L}\left((1-M^2)\mathrm{Re}(\alpha)\left|\frac{\partial q}{\partial x_1}\right|^2+\mathrm{Re}(\frac{1}{\alpha})\left|\frac{\partial q}{\partial x_2}\right|^2+|q|^2\right)dx\\
\geq&\inf_{x\in\Omega_L}\left(\frac{(1-M^2)\omega^2}{\sigma^2(x_1)+\omega^2}\right)\int_{\Omega_L}|\nabla q|^2\;dx + \|q\|^2_{L^2(\Omega_L)}\\
\geq&C\|q\|^2_{H^1(\Omega_L)},
\end{split}\end{equation}
where $C=\inf_{x\in\Omega_L}\left(\frac{(1-M^2)\omega^2}{\sigma^2(x_1)+\omega^2}\right)$. This implies that $|s_{\Omega_{L}}(q,q)|\geq C\|q\|^2_{H^1(\Omega_L)}$. Thus, applying Lax-Milgram theorem and Riez representation theorem, there exist a bounded operator $S_{L}$ on $H^1(\Omega^{L})$ such that
\begin{equation*}
(S_{L}p,q)_{H^1(\Omega^{L})}=s_{\Omega_{L}}(p,q)\;\;\;\;\;\forall\;p,q\in H^1(\Omega^{L}),
\end{equation*}
and $S_{L}$ is of Fredholm type. Therefore, from Corollary 4.47 in \cite{AA2002}, $S_{L}+C_{L}$ is of Fredholm type.
\end{proof}
Since (\ref{woed2}) is of Fredholm type, the problem (\ref{woed2}) is well posed if and only if the homogeneous problem has only the trivial solution $p=0$. In order to ensure the well-posedness, it will be discussed in Section 3.2.

Next, we discuss reduced problems which are equivalent to (\ref{oed1}) and (\ref{oedreduce3}). These reduced problems have two main advantages: first, they provide a result of existence and uniqueness of the solution. Second, numerical methods can be used to solve them, since they are posed in a bounded domain $\Omega_b$. In this setting, the error analysis of the PML method is established in a bounded domain $\Omega_b$.

\subsection{Reduction to a bounded domain}
Let us reduce the problem (\ref{oed1}) in $\Omega$ to a problem in a bounded domain for the error estimation. The idea is based on that in \cite{BBL2004}. To derive the appropriate boundary conditions for the reduced problem, we first consider the homogeneous problem of (\ref{oed1}), i.e. the right-hand side is $0$. In this case, the solution of (\ref{oed1}) is represented by
\begin{equation}\begin{split}\label{mode1}
p^{\pm}_n(x)=e^{i\beta^{\pm}_n x_1} \phi_{n}(x_2),
\end{split}\end{equation}
where $n\in\{0\}\cup\mathds{N}$, $\phi_{0}(x_2)=\frac{1}{\sqrt{d}}$, $\phi_{n}(x)=\sqrt{\frac{2}{d}}\cos(\frac{n\pi x_2}{d})$, $n=1,2,3,\cdots$. Here, $x_1$-axial wave numbers $\beta^{\pm}_n$ satisfy that
\begin{equation}\begin{split}\label{wavenumbereq1}
-(1-M^2)\beta^2-2kM\beta+k^2=\frac{n^2 \pi^2}{d^2}.
\end{split}\end{equation}
Let $K_0=\frac{kd}{\pi\sqrt{1-M^2}}$. Let us denote that $[K_0]$ is the integer part of $K_0$. We set $[K_0]=N_0$. Then $x_1$-axial wave numbers $\beta^{\pm}_n$ are given by
\begin{equation}\begin{split}\label{wavenumber1}
\beta^{\pm}_n=&\frac{-kM\pm \sqrt{k^2-\frac{n^2 \pi^2}{d^2}(1-M^2)}}{1-M^2},\text{ if $n\leq N_0$},\\
\beta^{\pm}_n=&\frac{-kM\pm i\sqrt{\frac{n^2 \pi^2}{d^2}(1-M^2)-k^2}}{1-M^2},\text{ if $n\geq N_0$}.
\end{split}\end{equation}
The solutions $p^{+}_n(x)$ or $p^{-}_n(x)$ are outgoing waves in the direction of $x_1\rightarrow \infty$ ($x_1\rightarrow -\infty$, respectively). If $\beta^{\pm}_n$ is real, then $p^{\pm}_n$ are called propagating modes. The group velocity $\frac{\partial \omega}{\partial \beta}$ is positive for $p^{+}_n$ and negative for $p^{-}_n$.  The number of propagating modes increases as the Mach number $M\rightarrow 1$.  In the convected Helmholtz equation, it is well-known that inverse upstream modes $p^{+}_n(x)$ which have a positive group velocity and a negative phase velocity $\frac{\omega}{\beta}$ is appeared in \cite{BBL2004}. If $\beta^{\pm}_n$ is complex, then $p^{\pm}_n$ is called an evanescent mode. Evanescent modes $p^{+}_n$ or $p^{-}_n$ are exponentially decaying when $x_1\rightarrow \infty$ ($x_1\rightarrow -\infty$, respectively).

To construct the posed problem from (\ref{oed1}), some preliminaries are concerned in the following. Let us choose a bounded domain $\Omega_{b}=\{x\in\Omega,\; x^{-}< x_1 < x^{+}\}$ which contains the support $\Omega_f$ of $f(x)+\dot{W}^{h}(x)$. Define $\Sigma_{\pm}$ are two boundaries $\{x\in\Omega|\;x_1=x^{\pm}\}$. The inner product $(\cdot,\cdot)_{L^2(\Sigma_{+})}$ (respectively $(\cdot,\cdot)_{L^2(\Sigma_{-})}$) is defined by
\begin{equation}\begin{split}
(u,v)_{L^2(\Sigma_{+})}=\int_{\Sigma_{+}}u\bar{v} \;dx_2.
\end{split}\end{equation}

We now discuss the reduced problem in $\Omega_b$ which is equivalent to $(\ref{oed1})$. According to \cite{BBL2004}, the problem (\ref{oed1}) is reduced to
\begin{equation}\begin{split}\label{oedreduce1}
&(1-M^2)\frac{\partial^2 p^{h}_b}{\partial x^2_1}+\frac{\partial^2 p^{h}_b}{\partial x^2_2}+2ikM\frac{\partial p^{h}_b}{\partial x_1}+k^2p^{h}_b=f+\dot{W}^{h}\;\;\;\;\;\; \text{in }\Omega_{b},\\
&\frac{\partial p^{h}_b}{\partial x_2}=0\;\;\;\;\;\; \text{on }\partial\Omega \cap \partial\Omega_{b},\\
&\frac{\partial p^{h}_b}{\partial {\bf n}}=-T_{\pm}p^{h}_b\;\;\;\; \text{on }\Sigma_{\pm},
\end{split}\end{equation}
where the vector ${\bf n}$ denotes the outward unit normal to $\Sigma_{\pm}$. Here, the Dirichlet-to-Neumann(DtN) operators $T_{\pm}:H^{1/2}(\Sigma_{\pm})\rightarrow H^{-1/2}(\Sigma_{\pm})$ are defined by
\begin{equation}\begin{split}\label{dtn1}
T_{\pm}(\psi)=\mp\sum^{\infty}_{n=0}i\beta^{\pm}_n (\psi,\phi_n)_{L^2(\Sigma_{\pm})}\phi_n(x_2).
\end{split}\end{equation}
The DtN operators $T_{\pm}$ play as exact nonreflecting boundary condition so that two problems (\ref{oed1}) and (\ref{oedreduce1}) are equivalent. Furthermore, a weak from of (\ref{oedreduce1}) is written as follows:
\begin{equation}\begin{split}\label{woed11}
a_{\Omega_b}(p^{h}_b,q)=-\int_{\Omega_b}(f+\dot{W}^{h})\bar{q}\;dx,\;\;\;\;\; \forall q\in H^{1}(\Omega_b),
\end{split}\end{equation}
where the sesquilinear form $a_{\Omega_b}(\cdot,\cdot)$ is
\begin{equation}\begin{split}
a_{\Omega_b}(p,q)=b_{\Omega_b}(p,q)+C_{\Omega_b}(p,q)
\end{split}\end{equation}
with
\begin{equation}\begin{split}
b_{\Omega_b}(p,q)&=\int_{\Omega_b}\int_{\Omega}(\mathcal{M}\;\nabla p)\cdot\nabla\bar{q}\;dx+\int_{\Omega}k^2 p\bar{q}\;dx+\langle T_{+}p,q\rangle_{\Sigma_{+}}+\langle T_{-}p,q\rangle_{\Sigma_{-}},\\
c_{\Omega_b}(p,q)&=-2ik\int_{\Omega}\nabla\cdot \left(\begin{array}{c} Mp\\0 \\\end{array}\right)\bar{q}\;dx.
\end{split}\end{equation}
Here, the brackets $\langle \cdot,\cdot\rangle_{\Sigma_{+}}$, $\langle \cdot,\cdot\rangle_{\Sigma_{-}}$ are the natural duality pairing of $H^{1/2}(\Sigma_{\pm})$ and $H^{-1/2}(\Sigma_{\pm})$, respectively. The well-posedness of the problem (\ref{oedreduce1}) is served in the next Theorem.
\begin{thm}\label{t1} We assume that
\begin{equation}\begin{split}\label{assumptionk}
k\neq \sqrt{1-M^2}\;\frac{n \pi}{d}
\end{split}\end{equation}
for $n\in\mathds{N}$. Then the problem (\ref{woed1}) is well posed.
\end{thm}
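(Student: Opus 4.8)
The plan is to reduce well-posedness to uniqueness via the Fredholm alternative and then to establish uniqueness by an energy-flux argument carried out after removing the convective phase. First note that although the statement names (\ref{woed1}), the operators $T_\pm$ act as exact nonreflecting boundary conditions, so (\ref{woed1}) and the reduced problem (\ref{woed11}) on $\Omega_b$ have the same solvability; I would therefore work with $a_{\Omega_b}$. Exactly as in Theorem \ref{thmwell}, I would split $a_{\Omega_b}=s+k$, absorbing the lower-order convective term $-2ik\int_{\Omega_b}\nabla\cdot(Mp,0)^{T}\bar q$ and the zeroth-order term into a compact operator via the compact embedding $H^1(\Omega_b)\hookrightarrow L^2(\Omega_b)$, and keeping a coercive principal part. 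The only new ingredient is the boundary form $\langle T_\pm q,q\rangle_{\Sigma_\pm}$: using (\ref{dtn1}) and (\ref{wavenumber1}) one checks that for the evanescent indices $n>N_0$ the quantity $\mathrm{Re}(-i\beta_n^{+})=\mathrm{Im}(\beta_n^{+})>0$, while the propagating indices $n\le N_0$ contribute a purely imaginary term, so $\mathrm{Re}\langle T_+q,q\rangle_{\Sigma_+}\ge 0$ and likewise for $T_-$. Hence the DtN form does not spoil the G\aa rding inequality $\mathrm{Re}(a_{\Omega_b}(q,q))\ge c\|q\|^2_{H^1(\Omega_b)}-C\|q\|^2_{L^2(\Omega_b)}$, the form is of Fredholm type, and (\ref{woed11}) is well posed if and only if its homogeneous version admits only $p=0$.

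For the uniqueness step I would pass to the standard (non-convected) Helmholtz equation. Writing $p=e^{-i\frac{kM}{1-M^2}x_1}u$ turns the reformulated operator in (\ref{oedreduce1}) into $(1-M^2)\partial^2_{x_1}u+\partial^2_{x_2}u+\frac{k^2}{1-M^2}u$, and the rescaling $\xi=x_1/\sqrt{1-M^2}$ then yields the constant-coefficient equation $\partial^2_\xi u+\partial^2_{x_2}u+\tilde k^2 u=0$ with $\tilde k=k/\sqrt{1-M^2}$ and the same homogeneous Neumann condition on the walls. Under this change of variables the modes (\ref{mode1})--(\ref{wavenumber1}) become the usual waveguide modes $e^{\pm i\tilde\beta_n\xi}\phi_n$ with $\tilde\beta_n=\sqrt{\tilde k^2-n^2\pi^2/d^2}$, the group-velocity--selected outgoing condition encoded by $T_\pm$ becomes the ordinary radiation condition, and hypothesis (\ref{assumptionk}) becomes exactly $\tilde k\neq n\pi/d$, i.e. $\tilde\beta_n\neq 0$ for every $n$. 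Thus no mode sits at cut-off, and each mode is either strictly propagating or strictly (exponentially) evanescent.

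With the problem in standard form I would test the homogeneous equation against $u$ and take the imaginary part of the resulting energy identity. Writing $c_n^{\pm}=(u,\phi_n)_{L^2(\Sigma_\pm)}$, the bulk terms are real and the boundary contribution reduces to $-\sum_{n\le N_0}\tilde\beta_n(|c_n^{+}|^2+|c_n^{-}|^2)=0$ with all coefficients of one sign, so every propagating amplitude $c_n^{\pm}$ must vanish. Consequently $u$ consists only of evanescent modes in the two semi-infinite ducts and decays exponentially as $x_1\to\pm\infty$; extending it by its outgoing modal expansion produces an $L^2(\Omega)$ solution of $(\Delta+\tilde k^2)u=0$ with Neumann data on $\partial\Omega$. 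Since the Neumann Laplacian on the infinite strip has purely continuous spectrum and hence no positive eigenvalue, $u\equiv 0$, and undoing the substitution gives $p\equiv 0$; this yields uniqueness and therefore well-posedness.

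The delicate point is precisely this uniqueness step, and specifically the sign of the energy flux. In the original convected variables the inverse upstream modes have positive group velocity but negative phase velocity, so the naive $L^2$ flux $\mathrm{Re}\int_{\Omega_b}\partial_{x_1}p\,\bar p$ is sign-indefinite and the imaginary part of $a_{\Omega_b}(p,p)$ does not obviously annihilate the propagating amplitudes. Removing the convective phase by the substitution above is what restores a definite sign and lets the flux argument close. I expect the bookkeeping of the transformation (matching the transformed DtN data, the rescaled domain, and the indices $n\le N_0$ against $N_0=[K_0]$) to be the main technical burden, while the role of (\ref{assumptionk}) is clear: it is exactly the condition $\tilde\beta_n\neq 0$ that makes the DtN operator (\ref{dtn1}) well defined and prevents a propagating coefficient from hiding at cut-off.
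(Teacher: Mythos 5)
Your proposal is correct, but it is worth knowing that it is not literally the paper's route: the paper offers no argument of its own for Theorem \ref{t1} --- its entire proof is the single line ``Refer to Theorem 2.2 in \cite{BBL2004}.'' What you have written is, in effect, a self-contained reconstruction of the cited proof, and it is the natural one. Your step (i), the G\aa rding/Fredholm reduction with the observation that $\mathrm{Re}\langle T_{\pm}q,q\rangle_{\Sigma_{\pm}}\geq 0$ (the propagating part of (\ref{dtn1}) contributing only skew, purely imaginary terms, the evanescent part being accretive since $\mathrm{Im}\,\beta^{+}_n>0$ and $\mathrm{Im}\,\beta^{-}_n<0$ for $n>N_0$), parallels the paper's own proof of Theorem \ref{thmwell} for the PML form. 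Your step (ii), uniqueness after the phase removal $p=e^{-ikMx_1/(1-M^2)}u$ and the dilation $\xi=x_1/\sqrt{1-M^2}$, is the Prandtl--Glauert reduction to the ordinary duct Helmholtz problem, where the flux identity annihilates the propagating amplitudes (this is exactly where (\ref{assumptionk}) enters, as $\tilde\beta_n\neq 0$) and the absence of nonzero $L^2(\mathds{R})$ solutions of the modal ODEs $u_n''+\tilde\beta_n^2u_n=0$ forces $u\equiv 0$. What the citation buys is brevity; what your proof buys is that it makes visible exactly where the non-cutoff hypothesis is used, with the same modal and DtN machinery the paper sets up anyway. One correction of emphasis: your claim that the flux argument is unworkable in the original convected variables is too strong. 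If one also integrates the convective term $2ikM\partial_{x_1}p$ by parts, the natural boundary quantity is $(1-M^2)\,\mathrm{Im}(\partial_{x_1}p\,\bar p)+kM|p|^2$ rather than the ``naive'' flux you dismiss, and for an outgoing propagating mode this equals $\left[(1-M^2)\beta^{+}_n+kM\right]|c_n|^2=\sqrt{k^2-(1-M^2)n^2\pi^2/d^2}\;|c_n|^2>0$; so the convected flux is sign-definite after all (it is the group velocity, not the phase velocity, that sets the sign), and your substitution is simply the cleanest bookkeeping of this fact.
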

\begin{proof}
Refer to Theorem 2.2 in \cite{BBL2004}.
\end{proof}

We now start with reducing the PML problem (\ref{oedreduce3}). After doing this, the error analysis can be derived from the reduced problem. More precisely, our aim is to compare the solution $p^{h}_b$ of (\ref{oedreduce1}) with the solution $p^{L}_b$ which satisfies:
\begin{equation}\begin{split}\label{oedreduce4}
&(1-M^2)(\frac{\partial }{\partial x_1}+\frac{Mki}{1-M^2})^2 p^{L}_b+\frac{\partial^2 p^{L}_b}{\partial x^2_2}+\frac{k^2}{1-M^2}p^{L}_b=f+\dot{W}^{h}\;\;\;\;\;\; \text{in }\Omega_{b},\\
&\frac{\partial p^{L}_b}{\partial x_2}=0\;\;\;\;\;\; \text{on }\partial\Omega \cap \partial\Omega_{b},\\
&\frac{\partial p^{L}}{\partial {\bf n}}=-T^{L}_{\pm}p^{L}_b\;\;\;\; \text{on }\Sigma_{\pm},
\end{split}\end{equation}
where the vector ${\bf n}$ denotes the outward unit normal to $\Sigma_{\pm}$ . Here, the DtN operators $T^{L}_{\pm}:H^{1/2}(\Sigma_{\pm})\rightarrow H^{-1/2}(\Sigma_{\pm})$ are defined by
\begin{equation}\begin{split}\label{dtn2}
T^{L}_{\pm}(\psi)&=\mp\sum^{\infty}_{n=0}i\nu^{\pm}_n (\psi(x_1,\cdot),\phi_n(\cdot))_{L^2(\Sigma_{\pm})}\phi_n(x_2),\\
\nu^{+}_n &= \beta^{+}_{n}- \frac{\beta^{+}_n-\beta^{-}_n}{1-e^{-i(\beta^{+}_n-\beta^{-}_n)\int^{L}_{0}\frac{1}{\alpha(x^{+}+s)}ds}},\\
\nu^{-}_n &= \beta^{-}_{n}+ \frac{\beta^{+}_n-\beta^{-}_n}{1-e^{-i(\beta^{+}_n-\beta^{-}_n)\int^{L}_{0}\frac{1}{\alpha(x^{-}-s)}ds}}.\\
\end{split}\end{equation}

Let us prove that the solutions of problems (\ref{oedreduce3}) and (\ref{oedreduce4}) are equivalent in $\Omega_b$, i.e. $p^{L}|_{\Omega_b}=p^{L}_b$. Then by using $p^{h}_b$ and $p^{L}_b$, we can obtain the error estimation with respect to $p^{h}$ and $p^{L}$ in $\Omega_b$. The next Theorem asserts that $p^{L}|_{\Omega_b}=p^{L}_b$.
\begin{thm}\label{thm2}
 Suppose $p^{L}$ and $p^{L}_b$ are satisfied (\ref{oedreduce3}) and (\ref{oedreduce4}), respectively. Then $p^{L}|_{\Omega_b}=p^{L}_b$.
\end{thm}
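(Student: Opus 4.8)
The plan is to exploit the fact that the forcing $f+\dot W^{h}$ is supported in $\Omega_f\subset\Omega_b$, so that on each PML layer $\Omega^{L}_{\pm}$ the solution $p^{L}$ of (\ref{oedreduce3}) satisfies the \emph{homogeneous} PML equation. Since $\sigma\equiv 0$ and hence $\alpha\equiv 1$ on $\Omega_b$, the restriction $p^{L}|_{\Omega_b}$ already satisfies exactly the interior equation of (\ref{oedreduce4}) and the Neumann condition on $\partial\Omega\cap\partial\Omega_b$. Thus the whole statement reduces to verifying the single boundary relation $\partial p^{L}/\partial{\bf n}=-T^{L}_{\pm}p^{L}$ on $\Sigma_{\pm}$, i.e. that the trace data produced by $p^{L}$ on the interfaces is precisely the one encoded by the truncated DtN operators of (\ref{dtn2}). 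Once this is shown, $p^{L}|_{\Omega_b}$ is a solution of (\ref{oedreduce4}), and the identification $p^{L}|_{\Omega_b}=p^{L}_b$ follows from uniqueness for the reduced problem (the well-posedness being the subject of Section 3.2 under the nonresonance assumption (\ref{assumptionk})).

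First I would expand $p^{L}$ in the layer $\Omega^{L}_{+}$ in the transverse orthonormal basis, $p^{L}(x_1,x_2)=\sum_{n}a_n(x_1)\phi_n(x_2)$, which is legitimate since $\{\phi_n\}$ is complete in $L^{2}(0,d)$ and compatible with the Neumann walls. Substituting into the homogeneous form of (\ref{oedreduce3}) and using $\phi_n''=-(n\pi/d)^2\phi_n$, each coefficient solves the modal ODE $(1-M^2)[\alpha(\tfrac{d}{dx_1}+c)]^{2}a_n+(\tfrac{k^2}{1-M^2}-\tfrac{n^2\pi^2}{d^2})a_n=0$ with $c=\tfrac{Mki}{1-M^2}$. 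I would strip the convective factor by setting $a_n=e^{-c(x_1-x^{+})}b_n$ and introduce the PML-stretched variable $\xi(x_1)=\int_{x^{+}}^{x_1}\tfrac{dt}{\alpha(t)}$; a short computation gives $\alpha(\tfrac{d}{dx_1}+c)a_n=e^{-c(x_1-x^{+})}\tfrac{db_n}{d\xi}$, so that $b_n$ satisfies the constant-coefficient equation $(1-M^2)b_n''(\xi)+(\tfrac{k^2}{1-M^2}-\tfrac{n^2\pi^2}{d^2})b_n=0$. Its solutions are $e^{\pm i\mu_n\xi}$ with $\mu_n=\beta^{+}_n+\tfrac{Mk}{1-M^2}$ (equivalently $\beta^{+}_n-\beta^{-}_n=2\mu_n$), and in the unstretched region $\xi=x_1-x^{+}$ these reproduce the physical modes $e^{i\beta^{\pm}_n(x_1-x^{+})}$.

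Next I would impose the external Dirichlet condition and read off the DtN map. Writing $a_n=e^{-c(x_1-x^{+})}(A_n e^{i\mu_n\xi}+B_n e^{-i\mu_n\xi})$, the condition $p^{L}=0$ on $\Sigma^{L}_{+}$ forces $a_n(x^{+}+L)=0$, i.e. $B_n=-A_n e^{2i\mu_n\xi_L}$ with $\xi_L=\int_{0}^{L}\tfrac{ds}{\alpha(x^{+}+s)}$, which is exactly the integral appearing in (\ref{dtn2}). Differentiating and evaluating at the interface $x_1=x^{+}$ (where $\alpha=1$, $\xi=0$) gives the modal ratio $a_n'(x^{+})/a_n(x^{+})=-c+i\mu_n\tfrac{1+e^{2i\mu_n\xi_L}}{1-e^{2i\mu_n\xi_L}}$. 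A direct algebraic simplification using $c=\tfrac{Mki}{1-M^2}$ and $\beta^{+}_n-\beta^{-}_n=2\mu_n$ identifies this ratio with $i\nu^{+}_n$ for the $\nu^{+}_n$ of (\ref{dtn2}); summing over $n$ yields $\partial p^{L}/\partial{\bf n}=-T^{L}_{+}p^{L}$ on $\Sigma_{+}$. The layer $\Omega^{L}_{-}$ is treated identically, with the outward normal reversed and $\xi_L$ replaced by $\int_{0}^{L}\tfrac{ds}{\alpha(x^{-}-s)}$, producing the $\nu^{-}_n$ relation on $\Sigma_{-}$.

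The main obstacle is twofold. First, the termwise manipulation of the modal series must be justified rigorously: one has to check that the series and its $x_1$-derivative converge in the appropriate trace spaces so that the identity holds in $H^{-1/2}(\Sigma_{\pm})$ rather than only formally, and that each $\nu^{\pm}_n$ is well-defined, i.e. $1-e^{2i\mu_n\xi_L}\neq 0$. The latter follows from the PML design, since $\mathrm{Re}(1/\alpha)>0$ and $\mathrm{Im}(1/\alpha)>0$ in the layer force $\mathrm{Re}(2i\mu_n\xi_L)<0$ for both propagating ($\mu_n>0$ real) and evanescent ($\mu_n$ positive imaginary) modes, whence $|e^{2i\mu_n\xi_L}|<1$. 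Second, the final identification with $p^{L}_b$ needs uniqueness for (\ref{oedreduce4}); I would either invoke the well-posedness established in Section 3.2, or, to be self-contained, carry out the converse construction — extending any solution of (\ref{oedreduce4}) into the layers by the explicit modal formulas above, verifying it solves (\ref{oedreduce3}), and concluding that restriction and extension are mutually inverse, so the two solution sets coincide on $\Omega_b$.
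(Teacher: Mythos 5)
Your proposal is correct and follows essentially the same route as the paper: a transverse modal expansion in the layer (your stretched variable $\xi$ and convective factor $e^{-c(x_1-x^{+})}$ are exactly the paper's modal functions $\psi^{\pm}_n$), the Dirichlet condition at $\Sigma^{L}_{\pm}$ fixing the coefficient ratio, and the resulting interface Dirichlet-to-Neumann ratio identified with $i\nu^{\pm}_n$ of (\ref{dtn2}). Your added care about why $1-e^{2i\mu_n\xi_L}\neq 0$ and about invoking uniqueness of (\ref{oedreduce4}) to conclude $p^{L}|_{\Omega_b}=p^{L}_b$ matches (and slightly sharpens) what the paper does across its proof of this theorem and Theorem \ref{ttt1}.
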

\begin{proof} The key idea is to find a correct boundary condition on $\Sigma_{\pm}$. In $\Omega^{L}_{\pm}$, the solution $p^{L}$ satisfies a homogeneous equation. Therefore, we can apply the separation of variable method. Let us consider the solution in the domain $\Omega^{L}_{+}$. We define
\begin{equation}\begin{split}\label{modal}
\psi^{\pm}_{n}(x_1)=e^{-\frac{Mki}{1-M^2}(x_1-x^{+})+i(\beta^{\pm}_n+\frac{Mk}{1-M^2})\int^{x_1}_{x^{+}} \frac{1}{\alpha(s)}ds}.
\end{split}\end{equation}
Then $\psi^{\pm}_{n}$ satisfies
\begin{equation}\begin{split}\label{psi1}
\alpha(x_1)(\frac{\partial}{\partial x_1}+\frac{Mki}{1-M^2})\psi^{\pm}_{n}(x_1)=i(\beta^{\pm}_n+\frac{Mk}{1-M^2})\psi^{\pm}_{n}(x_1).
\end{split}\end{equation}
It follows from (\ref{psi1}) that $\psi^{\pm}_{n}(x_1)\phi_n(x_2)$ satisfies the homogeneous equation problem of (\ref{oedreduce3}) with boundary condition on $\partial\Omega \cap \partial\Omega_{b}$. By the Dirichlet boundary condition on $\Sigma^{L}_{+}$, the solution $p^{L}_{+}$ in $\Omega^{L}_{+}$ is given by
\begin{equation}\begin{split}\label{pl1}
p^{L}_{+}(x)=\sum^{\infty}_{n=0}(p^{L}_{+}(x^{+},\cdot),\phi_n(\cdot))_{\Sigma_{+}}(A^{+}_n\psi^{+}_n(x_1)+A^{-}_n\psi^{-}_n(x_1))\phi_{n}(x_2),
\end{split}\end{equation}
where
\begin{equation}
A^{\pm}_n=\mp\frac{e^{i\beta^{\mp}_n\int^{L}_0\frac{1}{\alpha(x^{+}+s)}ds}}{e^{i\beta^{+}_n\int^{L}_0\frac{1}{\alpha(x^{+}+s)}ds}-e^{i\beta^{-}_n\int^{L}_0\frac{1}{\alpha(x^{+}+s)}ds}}.
\end{equation}
By (\ref{assumptionk}) and (\ref{alpha}), $(\beta^{+}_n-\beta^{-}_n)\int^{L}_0\frac{1}{\alpha(x^{+}+s)}ds$ has a positive imaginary part $\int^{L}_{0}\frac{\sigma(x^{+}+s)}{w}ds$. Therefore, it can not be a element of $2\pi \mathds{Z}:=\{2\pi z|\;z\in\mathds{Z}\}$. In other words, the denominator of $A^{\pm}_n$ is not zero. In this reason, $A^{\pm}_n$ and the formula (\ref{pl1}) are well-defined.

Since $\alpha(x_1)|_{\Sigma_{+}}=1$ and $\alpha(x_1)$ is continuous, an exact boundary condition on $\Sigma_{+}$ satisfies that
\begin{equation}\begin{split}\label{dtnre1}
\frac{\partial p^{L}_{b}}{\partial x_1}|_{\Sigma_{+}}&=\alpha(x_1)(\frac{\partial }{\partial x_1}+\frac{Mki}{1-M^2})p^{L}_{b}|_{\Sigma_{+}}-\frac{Mki}{1-M^2}p^{L}_{b}|_{\Sigma_{+}}\\
&=\sum^{\infty}_{n=0}(p^{L}_{+}(x^{+},\cdot),\phi_n(\cdot))_{\Sigma_{+}}(iA^{+}_n(\beta^{+}_n+\frac{Mk}{1-M^2})+iA^{-}_n(\beta^{-}_n+\frac{Mk}{1-M^2}))\phi_{n}(x_2)\\
&\;\;\;-\sum^{\infty}_{n=0}(p^{L}_{+}(x^{+},\cdot),\phi_n(\cdot))_{\Sigma_{+}}(iA^{+}_n\frac{Mk}{1-M^2}+iA^{-}_n\frac{Mk}{1-M^2})\phi_{n}(x_2)\\
&=i\sum^{\infty}_{n=0}(p^{L}_{b}(x^{+},\cdot),\phi_n(\cdot))_{\Sigma_{+}}(A^{+}_n\beta^{+}_n+A^{-}_n\beta^{-}_n)\phi_{n}(x_2).
\end{split}\end{equation}
Repeating the argument for the boundary condition on $\Sigma_{+}$, an exact boundary condition on $\Sigma_{-}$ is
\begin{equation}\begin{split}\label{dtnre2}
\frac{\partial p^{L}_{b}}{\partial x_1}|_{\Sigma_{-}}=-i\sum^{\infty}_{n=0}(p^{L}_{b}(x_-,\cdot),\phi_n(\cdot))_{\Sigma_{+}}(B^{+}_n\beta^{+}_n+B^{-}_n\beta^{-}_n)\phi_{n}(x_2),
\end{split}\end{equation}
where
\begin{equation}
B^{\pm}_n=\mp\frac{e^{i\beta^{\mp}_n\int^{L}_0\frac{1}{\alpha(x^{-}-s)}ds}}{e^{i\beta^{+}_n\int^{L}_0\frac{1}{\alpha(x^{-}-s)}ds}-e^{i\beta^{-}_n\int^{L}_0\frac{1}{\alpha(x^{-}-s)}ds}}.
\end{equation}
Let $\nu^{+}_n=A^{+}_n\beta^{+}_n+A^{-}_n\beta^{-}_n$ and $\nu^{-}_n=(B^{-}_n\beta^{+}_n+B^{+}_n\beta^{-}_n)$. Then, we complete the proof.
\end{proof}

A weak from of (\ref{oedreduce4}) is also written by
\begin{equation}\begin{split}\label{woed3}
a^{L}_{\Omega_b}(p,q)=-\int_{\Omega_b}(f_0+\dot{W}^{h})\bar{q}\;dx,\;\;\;\;\; \forall q\in H^{1}(\Omega_b),
\end{split}\end{equation}
where the sesquilinear form $a^{L}_{\Omega}(\cdot,\cdot)$ is
\begin{equation}\begin{split}
a^{L}_{\Omega_b}(p,q)=b^{L}_{\Omega_b}(p,q)+C^{L}_{\Omega_b}(p,q)
\end{split}\end{equation}
with
\begin{equation}\begin{split}
b^{L}_{\Omega_b}(p,q)&=\int_{\Omega_L}(\mathcal{M}\;\nabla p)\cdot\nabla\bar{q}dx-\int_{\Omega_L}k^2 \;p\bar{q}\;dx+\langle T^{L}_{+}p,q\rangle_{\Sigma_{+}}+\langle T^{L}_{-}p,q\rangle_{\Sigma_{-}},\\
c^{L}_{\Omega_b}(p,q)&=-2ik\int_{\Omega_L}\nabla\cdot \left(\begin{array}{c} Mp\\0 \\\end{array}\right)\bar{q}\;dx.
\end{split}\end{equation}
{\bf Remark.} Suppose $\psi^{+}_n(x_1)$ are outgoing propagating modes, i.e. $n\leq N_0$. Then $\psi^{-}_n(x_1)$ are modes reflected by $\Sigma^{L}_{+}$. Amplitudes of the reflection coefficients $R^{\sigma}_n$ for each $\psi^{-}_n(x_1)$ are
\begin{equation}\label{rf1}
R^{\sigma}_n=\left|\frac{A^{-}_n}{A^{+}_n}\right|=e^{-\frac{2k}{1-M^2}\sqrt{1-\frac{n^2}{K^2_0}}\int^{L}_{0}\frac{\sigma(x^{+}+s)}{\omega}ds}.
\end{equation}
For evanescent modes $\psi^{+}_n(x_1)$ ($n\geq N_0$), amplitudes of reflection coefficients $R^{\sigma}_n$ are
\begin{equation}\label{rf2}
R^{\sigma}_n=\left|\frac{A^{-}_n}{A^{+}_n}\right|=e^{-\frac{2kL}{1-M^2}\sqrt{\frac{n^2}{K^2_0}-1}}.
\end{equation}
In other words, reflection coefficients of waves caused by $\Sigma^{L}_{+}$ are bounded by $e^{-\frac{2k}{1-M^2}C\int^{L}_{0}(1\wedge\frac{\sigma(x^{+}+s)}{\omega})ds}$, where $C$ is a positive constant depending on $K_0$. For example, if $\sigma$ is that of the example $\ref{exam1}$, then upper bounds of reflection coefficients are
\begin{equation}
e^{-\frac{2C}{3c_0(1-M^2)}\sigma_{+} L^3}\;\text{ and $e^{-\frac{2kC}{1-M^2} L}$}
\end{equation}
for propagating modes and evanescent modes, respectively. By using (\ref{rf1}) and (\ref{rf2}), we check that the solution $p^{L}$ converges to the solution $p^{h}$ as $L\rightarrow\infty$ in the following. Suppose the length $L$ of layers goes to infinity. Then amplitudes of reflection coefficients converge to 0 by (\ref{rf1}) and (\ref{rf2}). This means that all modes $\psi^{-}_n(x_1)$ reflected by PML are exponentially attenuated. The rigorous proof of the convergence will be studied in the next subsection.

Furthermore, the well-posedness of the problem (\ref{oedreduce4}) follows from Theorem \ref{thmwell} and (\ref{modal}). The  well-posedness of the problem is shown in the next theorem.
\begin{thm}\label{ttt1} Suppose that (\ref{assumptionk}) holds. Then the problem (\ref{oedreduce4}) is well posed.
\end{thm}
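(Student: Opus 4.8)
The plan is to split the argument into a soft Fredholm reduction, which converts well-posedness into a uniqueness statement, and then a concrete mode-by-mode uniqueness proof that is driven by the strict attenuation of the PML reflection coefficients established in the Remark. First I would reduce to uniqueness. By Theorem \ref{thm2} the reduced problem (\ref{oedreduce4}) is equivalent in $\Omega_b$ to the truncated PML problem (\ref{oedreduce3}), whose variational form (\ref{woed2}) was shown to be of Fredholm type in Theorem \ref{thmwell}. The proof there exhibits the associated operator as a coercive isomorphism $S_L$ plus a compact perturbation $C_L$, hence a Fredholm operator of index zero; for such operators injectivity is equivalent to bijectivity. It therefore suffices to show that the homogeneous problem, with $f+\dot{W}^{h}\equiv 0$, admits only the trivial solution $p\equiv 0$.

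For the uniqueness step, let $p$ solve the homogeneous version of (\ref{oedreduce4}). Since the transverse eigenfunctions $\{\phi_n\}$ form a complete orthonormal basis of $L^2(0,d)$ adapted to the Neumann wall condition, I expand $p(x_1,x_2)=\sum_n c_n(x_1)\phi_n(x_2)$, and each coefficient $c_n$ solves the homogeneous one-dimensional equation associated with (\ref{wavenumbereq1}), so that $c_n(x_1)=\gamma^{+}_n e^{i\beta^{+}_n x_1}+\gamma^{-}_n e^{i\beta^{-}_n x_1}$. Inserting this into the two boundary relations $\partial_{\mathbf n}p=-T^{L}_{\pm}p$ on $\Sigma_{\pm}$ and using the explicit $\nu^{\pm}_n$ from (\ref{dtn2}), together with the identities $A^{+}_n+A^{-}_n=B^{+}_n+B^{-}_n=1$, decouples the problem into a homogeneous $2\times 2$ linear system for $(\gamma^{+}_n,\gamma^{-}_n)$ for each $n$. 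The whole matter then reduces to verifying that the determinant of this system never vanishes.

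The \emph{main obstacle} is precisely this determinant condition. After dividing out the factor $\beta^{+}_n-\beta^{-}_n$, which is nonzero under (\ref{assumptionk}) since the discriminant in (\ref{wavenumber1}) stays away from $0$, the nonvanishing becomes a comparison of moduli. On one side stands $A^{+}_n B^{+}_n/(A^{-}_n B^{-}_n)$, whose modulus is the reciprocal of a product of two reflection coefficients of the type (\ref{rf1})--(\ref{rf2}) and is therefore strictly larger than $1$, because $\sigma>0$ and $L>0$ make each such coefficient strictly less than $1$. On the other side stands $e^{i(\beta^{+}_n-\beta^{-}_n)(x^{+}-x^{-})}$, whose modulus equals $1$ for propagating modes and is $<1$ for evanescent modes by (\ref{wavenumber1}). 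A quantity of modulus $>1$ cannot equal one of modulus $\le 1$, so the determinant is nonzero for every $n$, forcing $\gamma^{\pm}_n=0$ and hence $p\equiv 0$. I expect the genuinely delicate point to be the sign bookkeeping in the two DtN conditions, so that the boundary relations really collapse to the reflection-coefficient inequalities; the positivity of $\mathrm{Im}\int_0^{L}\alpha^{-1}$ coming from $\sigma>0$ in (\ref{alpha}) is exactly what renders those inequalities strict and thereby closes the uniqueness argument, completing the proof via the Fredholm alternative.
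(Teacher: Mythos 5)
Your proposal is correct, and it follows the paper's overall skeleton: a Fredholm reduction (via Theorem \ref{thmwell} together with the equivalence of Theorem \ref{thm2}) to a uniqueness statement, followed by a transverse modal expansion of the homogeneous solution. Where you genuinely diverge is in how the modal uniqueness is closed, and your version is in fact more complete than the paper's. The paper asserts that the DtN conditions on $\Sigma_{\pm}$ directly yield the \emph{decoupled} relations $D^{+}_n(\nu^{\pm}_n-\beta^{+}_n)=D^{-}_n(\nu^{\pm}_n-\beta^{-}_n)=0$ and then invokes (\ref{assumptionk}); but what the two boundary conditions actually produce, for each $n$, is the coupled system
\begin{equation*}
D^{+}_n e^{i\beta^{+}_n x^{\pm}}\bigl(\beta^{+}_n-\nu^{\pm}_n\bigr)+D^{-}_n e^{i\beta^{-}_n x^{\pm}}\bigl(\beta^{-}_n-\nu^{\pm}_n\bigr)=0,
\end{equation*}
so one must rule out a vanishing $2\times 2$ determinant, a point the paper's proof passes over. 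Your determinant computation does exactly this: using $A^{+}_n+A^{-}_n=B^{+}_n+B^{-}_n=1$ one gets $\beta^{+}_n-\nu^{+}_n=A^{-}_n(\beta^{+}_n-\beta^{-}_n)$, $\beta^{-}_n-\nu^{+}_n=-A^{+}_n(\beta^{+}_n-\beta^{-}_n)$, and the analogous identities with $B^{\pm}_n$, so that after cancelling $(\beta^{+}_n-\beta^{-}_n)^2\neq 0$ (guaranteed by (\ref{assumptionk})) the nonvanishing of the determinant reduces precisely to $A^{+}_nB^{+}_n/(A^{-}_nB^{-}_n)\neq e^{i(\beta^{+}_n-\beta^{-}_n)(x^{+}-x^{-})}$, as you state. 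Your modulus comparison then settles this for every mode: the left side is the reciprocal of a product of the reflection coefficients (\ref{rf1})--(\ref{rf2}), hence of modulus strictly greater than $1$ (strictness coming from $\sigma>0$ in the layers, $L>0$, and the absence of a cut-off mode under (\ref{assumptionk})), while the right side has modulus $1$ for propagating modes and modulus less than $1$ for evanescent ones. In short, your route costs the explicit $2\times 2$ algebra but buys a rigorous closure of the mode-by-mode argument; the paper's version is shorter but, read literally, skips the coupling between $D^{+}_n$ and $D^{-}_n$ that your determinant handles.
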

\begin{proof}
Applying Theorem \ref{thmwell}, it is enough to show that the homogeneous problem of (\ref{oedreduce4}) has only trivial solution. By applying (\ref{modal})  in Theorem \ref{thm2} and replacing $x^{+}$ with $0$, the solution of the homogeneous problem is decomposed as follows:
\begin{equation}\label{hode}
p^{L}_b(x)=\sum^{\infty}_{n=0}(D^{+}_n\psi^{+}_{n}(x_1)+D^{-}_n\psi^{-}_{n}(x_1))\phi_n(x_2),
\end{equation}
where $D^{\pm}_n$ are complex constants. The boundary condition $\frac{\partial p^{L}}{\partial {\bf n}}=-T^{L}_{\pm}p^{L}_b$ on $\Sigma^{L}_{\pm}$ implies that
\begin{equation}
D^{+}_n(\nu^{\pm}_n-\beta^{+}_n)=D^{-}_n(\nu^{\pm}_n-\beta^{-}_n)=0\;\;\;\;\text{for $n=0,1,2,\cdots$}.
\end{equation}
Since $(\nu^{\pm}_n-\beta^{+}_n)=(\nu^{\pm}_n-\beta^{-}_n)=0$ are equivalent to $(\beta^{+}_n-\beta^{-}_n)=0$, the assumption $(\ref{assumptionk})$ leads to $D^{\pm}_n=0$, for $n=0,1,2,\cdots$. Consequently, $p^{L}_b(x)=0$.
\end{proof}

\subsection{Convergence and error estimates in $\Omega_b$} Recall that the object of our study is to establish the error analysis with respect to $p$ of (\ref{oe1}) and $p^{L}$ of (\ref{oedreduce3}). As discussed in section 2, we applied the discretized white noise for building more regular problem (\ref{oed1}) and provided the error estimation $E[|p-p^{h}|^{2}_{L^2(\Omega_b)}]$. Furthermore, it was shown from (\ref{oedreduce1}) and Theorem \ref{thm2} that $p^{h}$ and $p^{L}$ are equivalent to $p^{h}_b$ and $p^{L}_b$ in $\Omega_b$, respectively. Therefore, we would be able to obtain the error between $p^{h}$ and $p^{L}$ which leads to the final result in Theorem \ref{thm4}. In this reason, we first study the error analysis with respect to $p^{h}_b$ and $p^{L}_b$. To do this, some preliminary observations are discussed in the followings.

Let us consider $p^{h}_b$ and $p^{L}_b$ which satisfy variational formulas:
\begin{equation}\begin{split}
a_{\Omega_b}(p^{h}_b,q)=&-\int_{\Omega_b}(f_0+\dot{W}^{h})\bar{q}\;dx,\;\;\;\;\; \forall q\in H^{1}(\Omega_b),\\
a^{L}_{\Omega_b}(p^{L}_b,q)=&-\int_{\Omega_b}(f_0+\dot{W}^{h})\bar{q}\;dx,\;\;\;\;\; \forall q\in H^{1}(\Omega_b),
\end{split}\end{equation}
respectively. We also define a function $\tilde{\sigma}(x_1)=(1\wedge\frac{\sigma(x_1)}{\omega})$. In this setting, the next lemma is proved.
\begin{lem}\label{lemerror2}
Suppose the length $L$ of layers is large enough. Then there exists positive constants $C_{1}$ and $C_2$ such that for all $p,q\in H^{1}(\Omega_b)$, we have
\begin{equation}\label{varierror}
|a_{\Omega_b}(p,q)-a^{L}_{\Omega_b}(p,q)|\leq C_1(e^{-C_2 \int^{L}_{0}\tilde{\sigma}(x^{+}+s)ds}+e^{-C_2 \int^{L}_{0}\tilde{\sigma}(x^{-}-s)ds})\|p\|_{H^{1}(\Omega_b)}\|q\|_{H^{1}(\Omega_b)}.
\end{equation}
Precisely, $C_2=\frac{2k}{1-M^2}(1 \wedge \sqrt{\frac{(N_0+1)^2}{K^2_0}-1})$.
\end{lem}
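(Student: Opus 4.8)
The plan is to reduce the whole estimate to the two nonlocal boundary terms. Since $\sigma\equiv 0$ on the computational domain $\Omega_b$, the coefficient $\alpha(x_1)$ equals $1$ throughout $\Omega_b$, so $\mathcal{M}_{\alpha}=\mathcal{M}$ and every volume integral defining $a^{L}_{\Omega_b}$ coincides identically with the corresponding integral in $a_{\Omega_b}$. Hence the two forms differ only through their Dirichlet-to-Neumann contributions, and
\begin{equation*}
a_{\Omega_b}(p,q)-a^{L}_{\Omega_b}(p,q)=\langle(T_{+}-T^{L}_{+})p,q\rangle_{\Sigma_{+}}+\langle(T_{-}-T^{L}_{-})p,q\rangle_{\Sigma_{-}}.
\end{equation*}
The problem therefore becomes one of bounding the symbols $\beta^{\pm}_n-\nu^{\pm}_n$ of the difference operators $T_{\pm}-T^{L}_{\pm}$.

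First I would write these symbols in closed form. From (\ref{dtn2}), setting $\Delta_n=\beta^{+}_n-\beta^{-}_n$ and $I^{\pm}_{L}=\int_{0}^{L}\frac{1}{\alpha(x^{\pm}\pm s)}\,ds$, one finds $\beta^{+}_n-\nu^{+}_n=\frac{\Delta_n}{1-e^{-i\Delta_n I^{+}_{L}}}$ and an analogous expression on $\Sigma_{-}$. Because $\frac{1}{\alpha(x_1)}=1+\frac{i\sigma(x_1)}{\omega}$ by (\ref{alpha}), the integral splits into $I^{\pm}_{L}=L+\frac{i}{\omega}\int_{0}^{L}\sigma(x^{\pm}\pm s)\,ds$, a complex number with strictly positive imaginary part. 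I would then evaluate $|1-e^{-i\Delta_n I^{\pm}_{L}}|$ in the two regimes of (\ref{wavenumber1}): for a propagating mode ($n\leq N_0$, $\Delta_n$ real and positive) the modulus of $e^{-i\Delta_n I^{\pm}_{L}}$ equals $e^{\Delta_n\frac{1}{\omega}\int_{0}^{L}\sigma}$, reproducing exactly the reflection coefficient (\ref{rf1}); for an evanescent mode ($n\geq N_0+1$, $\Delta_n=i\delta_n$ with $\delta_n>0$) it equals $e^{\delta_n L}$, the coefficient (\ref{rf2}). In both cases the elementary inequalities $e^{t}-1\geq t$ and $e^{t}-1\geq\tfrac12 e^{t}$ (for $t\geq\ln 2$) give, once $L$ is large, $|\beta^{\pm}_n-\nu^{\pm}_n|\leq 2|\Delta_n|\,e^{-\mathrm{Re}(-i\Delta_n I^{\pm}_{L})}$.

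The core step is to turn these into a single bound $\sup_n|\beta^{\pm}_n-\nu^{\pm}_n|\leq C_1\, e^{-C_2\int_{0}^{L}\tilde{\sigma}(x^{\pm}\pm s)\,ds}$. Checking that $t\mapsto t/(e^{t}-1)$ is decreasing, the supremum over the evanescent family is attained at $n=N_0+1$ and over the finite propagating family at $n=N_0$. With $\tilde{\sigma}=1\wedge\frac{\sigma}{\omega}$ one has both $L\geq\int_{0}^{L}\tilde{\sigma}$ and $\frac{1}{\omega}\int_{0}^{L}\sigma\geq\int_{0}^{L}\tilde{\sigma}$, so the evanescent decay is controlled by the rate $\delta_{N_0+1}=\frac{2k}{1-M^2}\sqrt{(N_0+1)^2/K_0^2-1}$ against $\int_{0}^{L}\tilde{\sigma}$, while the propagating decay, whose threshold factor $\sqrt{1-N_0^2/K_0^2}$ is strictly positive by (\ref{assumptionk}) (equivalently $K_0\notin\mathds{Z}$), is absorbed for $L$ large because the absorption integral $\int_{0}^{L}\sigma$ outgrows $\int_{0}^{L}\tilde{\sigma}$ and so restores the rate $\frac{2k}{1-M^2}$. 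Taking the smaller of the two rates produces $C_2=\frac{2k}{1-M^2}(1\wedge\sqrt{(N_0+1)^2/K_0^2-1})$.

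Finally I would assemble the operator estimate. Expanding the traces of $p$ and $q$ in the orthonormal basis $\{\phi_n\}$ of $L^2(\Sigma_{\pm})$, Cauchy--Schwarz and Parseval give
\begin{equation*}
|\langle(T_{\pm}-T^{L}_{\pm})p,q\rangle_{\Sigma_{\pm}}|\leq\sup_n|\beta^{\pm}_n-\nu^{\pm}_n|\,\|p\|_{L^2(\Sigma_{\pm})}\|q\|_{L^2(\Sigma_{\pm})},
\end{equation*}
and the trace inequality $\|\cdot\|_{L^2(\Sigma_{\pm})}\leq\|\cdot\|_{H^{1/2}(\Sigma_{\pm})}\leq C\|\cdot\|_{H^{1}(\Omega_b)}$ converts these into $H^{1}(\Omega_b)$ norms; adding the two boundary contributions yields (\ref{varierror}). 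I expect the main obstacle to be precisely the uniform-in-$n$ merging of the previous paragraph: the finitely many propagating modes can decay slowly near the cutoff $n=N_0$, so the argument must genuinely exploit that $L$ is large and that the absorption profile grows, in order to recover the clean exponential rate recorded in $C_2$ rather than the smaller threshold rate $\sqrt{1-N_0^2/K_0^2}$.
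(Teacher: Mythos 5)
Your proposal retraces the paper's own proof nearly step for step: the reduction of $a_{\Omega_b}-a^{L}_{\Omega_b}$ to the two DtN differences is the paper's (\ref{113}); your closed form $\beta^{+}_n-\nu^{+}_n=\Delta_n/(1-e^{-i\Delta_n I^{+}_{L}})$ with $I^{+}_{L}=L+\tfrac{i}{\omega}\int_0^L\sigma(x^{+}+s)\,ds$ is exactly the quantity the paper estimates in (\ref{eq114}); the propagating/evanescent dichotomy with the elementary lower bound $|1-e^{iz}|\ge\tfrac12 e^{-\mathrm{Im}(z)}$ is the paper's (\ref{exp})--(\ref{evan}); and the trace theorem closes both arguments. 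Your one structural variation --- taking $\sup_n|\beta^{+}_n-\nu^{+}_n|$ and using $L^2(\Sigma_{\pm})$ norms via Parseval, instead of the paper's weighted sum $\sum_n(1+n^2/K^2_0)^{1/2}|\zeta_n\bar{\eta}_n|\le|\zeta|_{H^{1/2}(\Sigma_+)}|\eta|_{H^{1/2}(\Sigma_+)}$ --- is legitimate and slightly cleaner, since your mode-wise bounds are uniform in $n$ (the evanescent bound $\delta_n/(e^{\delta_n L}-1)$ decreases in $n$).

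Where you genuinely depart from the paper is at the near-cutoff propagating modes, and you are right that this is the crux. The paper does not actually resolve it: in (\ref{propa}) it passes from $\sqrt{1-n^2/K^2_0}\,e^{-\frac{2k}{1-M^2}\sqrt{1-n^2/K^2_0}\,A}$ to $e^{-\frac{2k}{1-M^2}A}$, where $A=\tfrac1\omega\int_0^L\sigma(x^{+}+s)\,ds$; this amounts to $t\,e^{-tB}\le e^{-B}$ for $t\in(0,1]$, which is false for large $B$ whenever $N_0\ge 1$ (fix $t=\sqrt{1-N_0^2/K_0^2}<1$ and let $B\to\infty$). Your repair --- that for large $L$ the absorption integral $\tfrac1\omega\int_0^L\sigma$ outgrows $\int_0^L\tilde\sigma$, so the slow threshold rate applied to the larger integral still dominates $C_2\int_0^L\tilde\sigma$ --- is sound, but only under a hypothesis the lemma does not state: $\sigma$ must grow without bound on the layer, as in Example \ref{exam1}. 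The lemma assumes only that $\sigma$ is positive and $\mathcal{C}^1$; for a bounded profile $\sigma\le\omega$ one has $\int_0^L\tilde\sigma=\tfrac1\omega\int_0^L\sigma$, the mode $n=N_0$ decays at exactly the rate $\frac{2k}{1-M^2}\sqrt{1-N_0^2/K_0^2}$ (the symbol difference admits a matching lower bound, so this is not an artifact of the estimate), and the advertised $C_2$ is unattainable; the constant must then be weakened to $\frac{2k}{1-M^2}\bigl(\sqrt{1-N_0^2/K_0^2}\wedge 1\wedge\sqrt{(N_0+1)^2/K_0^2-1}\bigr)$. So your proof carries a conditional gap --- the unstated growth assumption on $\sigma$ --- but that gap is inherited from the statement itself, and on the critical step your argument is more careful than the one printed in the paper.
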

\begin{proof}
From (\ref{woed1}) and (\ref{woed2}), it follows that
\begin{equation}\label{113}
|a_{\Omega_b}(p,q)-a^{L}_{\Omega_b}(p,q)|\leq |\langle T_{+}p,q\rangle_{\Sigma_{+}}-\langle T^{L}_{+}p,q\rangle_{\Sigma_{+}}|+|\langle T_{-}p,q\rangle_{\Sigma_{-}}-\langle T^{L}_{-}p,q\rangle_{\Sigma_{-}}|.
\end{equation}
Since the estimation of two terms in the right-hand side of $(\ref{113})$ are analogous, we only derive the upper bound of the first term $|\langle T_{+}p,q\rangle_{\Sigma_{+}}-\langle T^{L}_{+}p,q\rangle_{\Sigma_{+}}|$. Let $\zeta=p|_{\Sigma_+}$ and $\eta=q|_{\Sigma_+}$. From (\ref{dtn1}) and (\ref{dtn2}), it is derived that for $\zeta\in H^{1/2}(\Sigma_+)$,
\begin{equation}
(T_+ - T^{L}_+)\zeta=-\sum^{\infty}_{n=0}i(\beta^{+}_{n}-\nu^{+}_{n})\zeta_n \phi_n(x_2),
\end{equation}
where $\zeta_n(x_1) =(\zeta(x_1,\cdot),\phi_n(\cdot))_{L^2(\Sigma_+)}$. Thus, it is shown that
\begin{equation}
\langle(T_+ - T^{L}_+)\zeta,\eta\rangle_{\Sigma_{+}}=-\sum^{\infty}_{n=0}i(\beta^{+}_{n}-\nu^{+}_{n})\zeta_n \bar{\eta}_n,
\end{equation}
where $\eta_n =(\eta,\phi_n)_{L^2(\Sigma_+)}$. Hence, (\ref{alpha}) and (\ref{dtn2}) lead to the inequality
\begin{equation}\begin{split}\label{eq114}
|\langle(T_+ - T^{L}_+)\zeta,\eta\rangle_{\Sigma_{+}}|\leq&\sum^{\infty}_{n=0}|\beta^{+}_{n}-\nu^{+}_{n}||\zeta_n \bar{\eta}_n|=\sum^{\infty}_{n=0}\frac{|\beta^{+}_{n}-\beta^{-}_{n}|}{|1-e^{i(\beta^{-}_{n}-\beta^{+}_{n})(L+i\int^{L}_{0}\frac{\sigma(x^{+}+s)}{\omega}ds)}|}|\zeta_n \bar{\eta}_n|.
\end{split}\end{equation}

On the other hand, if $z\in\mathds{C}$, $\mathrm{Im}(z)<0$, and $|\mathrm{Im}(z)|$ is large enough, then the following inequality holds:
\begin{equation}\label{exp}
|1-e^{iz}|\geq |e^{-\mathrm{Im}(z)}-1|\geq \frac{1}{2}e^{-\mathrm{Im}(z)}.
\end{equation}
To apply the inequality (\ref{exp}) for the term $|1-e^{i(\beta^{-}_{n}-\beta^{+}_{n})(L+i\int^{L}_{0}\frac{\sigma(x^{+}+s)}{\omega}ds)}|$ in (\ref{eq114}), we consider two cases in the following:

${}$\\
\rom{1}. propagating modes : suppose $\beta^{+}_{n}-\beta^{-}_{n}=\frac{2k}{1-M^2}\sqrt{1-\frac{n^2}{K^2_0}}$ i.e. $n\leq N_0$. Then, $\mathrm{Im}((\beta^{-}_{n}-\beta^{+}_{n})(L+i\int^{L}_{0}\frac{\sigma(x^{+}+s)}{\omega}ds))=-\frac{2k}{1-M^2}\sqrt{1-\frac{n^2}{K^2_0}}\int^{L}_{0}\frac{\sigma(x^{+}+s)}{w}ds$. Clearly, this quantity is negative. By choosing large enough L, the inequalily (\ref{exp}) holds for propagating modes. Therefore, it holds that
\begin{equation}\begin{split}\label{propa}
|\beta^{+}_{n}-\nu^{+}_{n}|&\leq \frac{4k}{1-M^2}\sqrt{1-\frac{n^2}{K^2_0}}\;e^{-\frac{2k}{1-M^2}\sqrt{1-\frac{n^2}{K^2_0}}\int^{L}_{0}\frac{\sigma(x^{+}+s)}{\omega}ds)}\\
&\leq \frac{4k}{1-M^2}e^{-\frac{2k}{1-M^2}\int^{L}_{0}\frac{\sigma(x^{+}+s)}{\omega}ds)}.
\end{split}\end{equation}

${}$\\
\rom{2}. evanescent modes : suppose $\beta^{+}_{n}-\beta^{-}_{n}=\frac{2ik}{1-M^2}\sqrt{\frac{n^2}{K^2_0}-1}$ i.e. $n\geq N_0+1$. Then, $\mathrm{Im}((\beta^{-}_{n}-\beta^{+}_{n})(L+i\int^{L}_{0}\frac{\sigma(s)}{\omega}ds))=-\frac{2kL}{1-M^2}\sqrt{\frac{n^2}{K^2_0}-1}$. By choosing large enough L, the inequality (\ref{exp}) is verified for evanescent modes. Thus,
\begin{equation}\begin{split}\label{evan}
|\beta^{+}_{n}-\nu^{+}_{n}|&\leq \frac{4k}{1-M^2}\sqrt{\frac{n^2}{K^2_0}-1}\;e^{-\frac{2kL}{1-M^2}\sqrt{\frac{n^2}{K^2_0}-1}}\\
&\leq \frac{4k}{1-M^2}\frac{n}{K_0}e^{-\frac{2kL}{1-M^2}\sqrt{\frac{(N_0+1)^2}{K^2_0}-1}}.
\end{split}\end{equation}
From (\ref{propa}) and (\ref{evan}), we obtain
\begin{equation}\begin{split}\label{eq115}
|\langle(T_+ - T^{L}_+)\zeta,\eta\rangle_{\Sigma_{+}}|\leq&\frac{4k}{1-M^2}\sum^{N_0}_{n=0}e^{-\frac{2k}{1-M^2}\int^{L}_{0}\frac{\sigma(x^{+}+s)}{\omega}ds)}|\zeta_n \bar{\eta}_n|\\
&+\frac{4k}{1-M^2}\sum^{\infty}_{n=N_0+1}\frac{n}{K_0}e^{-\frac{2kL}{1-M^2}\sqrt{\frac{(N_0+1)^2}{K^2_0}-1}}|\zeta_n \bar{\eta}_n|\\
\leq&\frac{4k}{1-M^2}e^{-\frac{2k}{1-M^2}(1 \wedge \sqrt{\frac{(N_0+1)^2}{K^2_0}-1})\int^{L}_{0}\tilde{\sigma}(x^{+}+s)ds}\sum^{\infty}_{n=0}(1+\frac{n^2}{K^2_0})^{1/2}|\zeta_n \bar{\eta}_n|\\
\leq& C_1 e^{-C_2\int^{L}_{0}\tilde{\sigma}(x^{+}+s)ds}|\zeta|_{H^{1/2}(\Sigma_{+})}|\bar{\eta}|_{H^{1/2}(\Sigma_{+})}.
\end{split}\end{equation}
According to the trace Theorem in \cite{E1998}, it follows that
\begin{equation}\begin{split}\label{tt12}
|\langle(T_+ - T^{L}_+)&p,q\rangle_{\Sigma_{+}}|\leq C_1e^{-C_2 \int^{L}_{0}\tilde{\sigma}(x^{+}+s)ds}\|p\|_{H^{1}(\Omega_b)}\|q\|_{H^{1}(\Omega_b)}.
\end{split}\end{equation}
By repeating the procedure for $(\ref{tt12})$, a similar result for $|\langle(T_- - T^{L}_-)p,q\rangle_{\Sigma_{-}}|$ is shown as follows:
\begin{equation}\begin{split}
|\langle(T_- - T^{L}_-)&p,q\rangle_{\Sigma_{-}}|\leq C_1e^{-C_2 \int^{L}_{0}\tilde{\sigma}(x^{-}-s)ds}\|p\|_{H^{1}(\Omega_b)}\|q\|_{H^{1}(\Omega_b)}.
\end{split}\end{equation}
\end{proof}
Now, we turn to the error estimation $\|p_b^h - p_b^L\|_{H^1(\Omega_b)}$. Let $V = H^1(\Omega_b)$. Then linear operators $\mathcal{A}$ and $\mathcal{A}^{L}$ in $\mathcal{L}(V,V^{\prime})$ are defined by
\begin{equation}\begin{split}
\langle \mathcal{A}p,q\rangle_{V^{\prime}}&=a_{\Omega_b}(p,q),\\
\langle \mathcal{A}^{L}p,q\rangle_{V^{\prime}}&=a^{L}_{\Omega_b}(p,q),
\end{split}\end{equation}
for all $p,q\in V$. Here, the operator $\mathcal{A}$ is the same as that of p.424 in \cite{BBL2004}. By Riesz representation Theorem, $\mathcal{A}$ and $\mathcal{A}^{L}$ are uniquely determined. Moreover, operator norms $\|\mathcal{A}\|_{\mathcal{L}(V,V^{\prime})}$ and $\|\mathcal{A}^{L}\|_{\mathcal{L}(V,V^{\prime})}$ are bounded by upper bounds of forms $a_{\Omega_b}(\cdot,\cdot)$ and $a^{L}_{\Omega_b}(\cdot,\cdot)$, respectively. It follows from Lemma \ref{lemerror2} that
\begin{equation}\begin{split}\label{opererror}
\|\mathcal{A}-\mathcal{A}^{L}\|_{\mathcal{L}(V,V^{\prime})}\leq C_1(e^{-C_2 \int^{L}_{0}\tilde{\sigma}(x^{+}+s)ds}+e^{-C_2 \int^{L}_{0}\tilde{\sigma}(X^{-}-s)ds}).
\end{split}\end{equation}
Since $p^{h}_b$ and $p^{L}_b$ satisfies that for all $q\in V$,
\begin{equation}\begin{split}
\langle \mathcal{A}p^{h}_b,q\rangle_{V^{\prime}}&=-\langle\tilde{f},q\rangle_{L^{2}(\Omega)},\\
\langle \mathcal{A}^{L}p^{L}_b,q\rangle_{V^{\prime}}&=-\langle\tilde{f},q\rangle_{L^{2}(\Omega)},
\end{split}\end{equation}
respectively, it holds that
\begin{equation}\begin{split}\label{oper1}
\langle\mathcal{A}^{L}(p^{h}_b-p^{L}_b),q\rangle_{V^{\prime}}=\langle(\mathcal{A}^{L}-\mathcal{A})p^{h}_b,q\rangle_V^{\prime}
\end{split}\end{equation}
for all $q\in V$.

From (\ref{opererror}) and (\ref{oper1}), the error estimation for $\|p^{h}_b-p^{L}_b\|_{H^1(\Omega_b)}$ is established as follows.
\begin{thm}\label{thm3}
There exists a large positive constant $\tilde{L}$ such that for all $L\geq\tilde{L}$, $\mathcal{A}^{L}$ is an isomorphism on $H^1(\Omega_b)$. Moreover, we have
\begin{equation}\label{finalerror2}
\|p^{h}_b-p^{L}_b\|_{H^1(\Omega_b)}\leq C_1 (e^{-C_2 \int^{L}_{0}\tilde{\sigma}(x^{+}+s)ds}+e^{-C_2 \int^{L}_{0}\tilde{\sigma}(x^{-}-s)ds})\|p^{h}_b\|_{H^{1}(\Omega_b)},
\end{equation}
where $C_{1}$ and $C_2$ are positive constants depending on $k$ and $M$.
\end{thm}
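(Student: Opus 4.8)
The plan is to treat $\mathcal{A}^{L}$ as a small perturbation of the exact operator $\mathcal{A}$ and to run a Neumann-series argument, feeding in the operator-norm decay already recorded in (\ref{opererror}). First I would recall that $\mathcal{A}$ itself is an isomorphism from $V=H^{1}(\Omega_b)$ onto $V'$. Indeed the exact reduced problem (\ref{woed1})/(\ref{oedreduce1}) is of Fredholm type and, under the assumption (\ref{assumptionk}), well posed by Theorem \ref{t1}; hence $\mathcal{A}:V\to V'$ is bijective, and by the bounded inverse theorem $\mathcal{A}^{-1}\in\mathcal{L}(V',V)$ with finite norm $C_{\mathcal{A}}:=\|\mathcal{A}^{-1}\|_{\mathcal{L}(V',V)}$, which depends only on $k$, $M$ and the fixed geometry of $\Omega_b$. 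Crucially $C_{\mathcal{A}}$ is a single number that does not vary with $L$.

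Next I would factor
\[
\mathcal{A}^{L}=\mathcal{A}\bigl(I+\mathcal{A}^{-1}(\mathcal{A}^{L}-\mathcal{A})\bigr)
\]
and control the perturbation using (\ref{opererror}), which bounds $\|\mathcal{A}^{L}-\mathcal{A}\|_{\mathcal{L}(V,V')}$ by $C_1\bigl(e^{-C_2\int_0^L\tilde\sigma(x^{+}+s)\,ds}+e^{-C_2\int_0^L\tilde\sigma(x^{-}-s)\,ds}\bigr)$. Since $\sigma>0$ throughout the PML, the exponents $\int_0^L\tilde\sigma(x^{\pm}\pm s)\,ds$ increase without bound as $L\to\infty$, so this quantity tends to $0$. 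Therefore there is a constant $\tilde L$ so large that, for every $L\geq\tilde L$,
\[
C_{\mathcal{A}}\,\|\mathcal{A}^{L}-\mathcal{A}\|_{\mathcal{L}(V,V')}\leq \tfrac{1}{2}.
\]
For such $L$ the operator $I+\mathcal{A}^{-1}(\mathcal{A}^{L}-\mathcal{A})$ is invertible on $V$ by the Neumann series, with inverse of norm at most $2$; consequently $\mathcal{A}^{L}$ is an isomorphism of $H^{1}(\Omega_b)$ onto $V'$, and moreover $\|(\mathcal{A}^{L})^{-1}\|_{\mathcal{L}(V',V)}\leq 2C_{\mathcal{A}}$ uniformly in $L\geq\tilde L$. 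This settles the first assertion.

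For the error bound I would invoke the identity (\ref{oper1}), namely $\mathcal{A}^{L}(p^{h}_b-p^{L}_b)=(\mathcal{A}^{L}-\mathcal{A})p^{h}_b$ in $V'$. Applying $(\mathcal{A}^{L})^{-1}$ and taking norms yields
\[
\|p^{h}_b-p^{L}_b\|_{H^1(\Omega_b)}\leq \|(\mathcal{A}^{L})^{-1}\|_{\mathcal{L}(V',V)}\,\|\mathcal{A}^{L}-\mathcal{A}\|_{\mathcal{L}(V,V')}\,\|p^{h}_b\|_{H^1(\Omega_b)}.
\]
Inserting the uniform bound $2C_{\mathcal{A}}$ for $\|(\mathcal{A}^{L})^{-1}\|$ together with (\ref{opererror}), and renaming the resulting constant $2C_{\mathcal{A}}C_1$ again as $C_1$, gives exactly (\ref{finalerror2}), with $C_2$ inherited unchanged from Lemma \ref{lemerror2}.

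The step I expect to be the main obstacle is securing the \emph{uniform} invertibility, i.e. an $L$-independent bound on $\|(\mathcal{A}^{L})^{-1}\|$; this is precisely what the perturbation/Neumann-series argument buys once the exponential decay in (\ref{opererror}) is combined with the fixed finiteness of $C_{\mathcal{A}}=\|\mathcal{A}^{-1}\|$ coming from Theorem \ref{t1}. Once that uniform bound is in hand, everything else is an immediate consequence of the identity (\ref{oper1}).
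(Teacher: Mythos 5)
Your proposal is correct and follows essentially the same route as the paper's own proof: both treat $\mathcal{A}^{L}$ as a perturbation $\mathcal{A}+(\mathcal{A}^{L}-\mathcal{A})$ of the isomorphism $\mathcal{A}$, invert $I+\mathcal{A}^{-1}(\mathcal{A}^{L}-\mathcal{A})$ via a Neumann-series/contraction argument once (\ref{opererror}) makes the perturbation small, and then apply the identity (\ref{oper1}) to bound $\|p^{h}_b-p^{L}_b\|_{H^1(\Omega_b)}$. Your version is, if anything, slightly cleaner in fixing the explicit threshold $C_{\mathcal{A}}\,\|\mathcal{A}^{L}-\mathcal{A}\|\leq\tfrac12$ to get the uniform bound $\|(\mathcal{A}^{L})^{-1}\|\leq 2C_{\mathcal{A}}$, where the paper simply absorbs $\bigl(1-\|\mathcal{A}^{-1}(\mathcal{A}^{L}-\mathcal{A})\|\bigr)^{-1}$ into a constant.
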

\begin{proof}
The argument of the proof is similar to that of Theorem 4.4 in \cite{BBL2004}. The operator $\mathcal{A}^{L}$ can be rewritten as $\mathcal{A}+(\mathcal{A}^{L}-\mathcal{A})$. According to \cite{BBL2004}, $\mathcal{A}$ is an isomorphism on the Hilbert space $V$ . Therefore,
\begin{equation}\label{aa}
\langle Ap,q\rangle_{V^{\prime}}=\langle p,A^{-1}q\rangle_{V}.
\end{equation}
We consider the problem: finding $u\in V$ which satisfies
\begin{equation}\label{ppp0}
\langle\mathcal{A}^{L}u,q\rangle_{V^{\prime}}=\langle g,q \rangle_{V^{\prime}},
\end{equation}
for $g\in V^{\prime}$ and all $q\in V$. From (\ref{aa}), the problem (\ref{ppp0}) becomes
\begin{equation}\label{fp}
\langle (I+\mathcal{A}^{-1}(\mathcal{A}^{L}-\mathcal{A}))u,q\rangle_{V}=\langle \mathcal{A}^{-1}g,q \rangle_{V}.
\end{equation}
From(\ref{opererror}), it holds that for $L\geq\tilde{L}$,
\begin{equation}
\|\mathcal{A}^{L}-\mathcal{A}\|<\|\mathcal{A}^{-1}\|^{-1}.
\end{equation}
This leads to
\begin{equation}
\|\mathcal{A}^{-1}(\mathcal{A}^{L}-\mathcal{A})\|<1,
\end{equation}
where $\|\cdot\|$ is a formal operator norm. By the Banach fixed point theorem in \cite{E1998}, the lineear map $\mathcal{A}^{L}$ admits a unique solution. Furthermore, it follows that
\begin{equation}\label{oper13}
\|(I+\mathcal{A}^{-1}(\mathcal{A}^{L}-\mathcal{A}))^{-1}\|<\frac{1}{1-\|\mathcal{A}^{-1}(\mathcal{A}^{L}-\mathcal{A})\|}.
\end{equation}
By (\ref{fp}) and (\ref{oper13}), the following inequality holds:
\begin{equation}\label{oper14}
\|u\|_V<\frac{\|\mathcal{A}^{-1}g\|_V}{1-\|\mathcal{A}^{-1}(\mathcal{A}^{L}-\mathcal{A})\|}.
\end{equation}
Let us set $u=p^{h}_b-p^{L}_b$. Then, by (\ref{oper1}) and (\ref{oper14}), we obtain
\begin{equation}\begin{split}
\|p^{h}_b-p^{L}_b\|_{H^{1}(\Omega_b)}&<\frac{\|\mathcal{A}^{-1}(\mathcal{A}^{L}-\mathcal{A})p^{h}_b\|_V}{1-\|\mathcal{A}^{-1}(\mathcal{A}^{L}-\mathcal{A})\|}\\
&\leq\frac{\|\mathcal{A}^{-1}\|\|\mathcal{A}^{L}-\mathcal{A}\|\|p^{h}_b\|_V}{1-\|\mathcal{A}^{-1}(\mathcal{A}^{L}-\mathcal{A})\|}.
\end{split}\end{equation}
Since $\|\mathcal{A}^{L}-\mathcal{A}\|$ is small enough by choosing some $\tilde{L}$, it is possible to taking a positive constant $C>1$ as a upper bound of $\frac{1}{1-\|\mathcal{A}^{-1}(\mathcal{A}^{L}-\mathcal{A})\|}$. Consequently, the proof is completed from (\ref{opererror}).
\end{proof}

Theorem \ref{thm3} implies that $p^{L}$ converge to $p^{h}$ in $\Omega_b$ as $L\rightarrow \infty$. Applying two estimations (\ref{finalerror1}) and (\ref{finalerror2}), it follows that the next error estimation holds.

\begin{thm}\label{thm4}
Suppose that (\ref{assumptionk}) holds. Let the length $L$ of the layers is large enough. Then there exist positive constants $C_{1}$ and $C_2$ depending on $k$, $\sup|K_i|$ and $M$ such that for $0<\epsilon<1$, we obtain
\begin{equation}\label{final}
E[\|p-p^{L}\|^2_{L^2(\Omega_b)}]\leq C_1(h^{2-\epsilon}+e^{-C_2 \int^{L}_{0}\tilde{\sigma}(x^{+}+s)ds}+e^{-C_2 \int^{L}_{0}\tilde{\sigma}(x^{-}-s)ds}).
\end{equation}
\end{thm}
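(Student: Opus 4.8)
The plan is to insert $p^{h}$ as an intermediate term and split the total error into the discretized-white-noise approximation error, already controlled in Theorem \ref{thm1}, and the PML-truncation error, controlled in Theorem \ref{thm3}. First I would apply the triangle inequality in $L^{2}(\Omega_{b})$, use $(a+b)^{2}\leq 2a^{2}+2b^{2}$, and take expectations, obtaining
\begin{equation*}
E[\|p-p^{L}\|^{2}_{L^{2}(\Omega_{b})}]\leq 2\,E[\|p-p^{h}\|^{2}_{L^{2}(\Omega_{b})}]+2\,E[\|p^{h}-p^{L}\|^{2}_{L^{2}(\Omega_{b})}].
\end{equation*}
The first term is bounded by $2Ch^{2-\epsilon}$ directly from Theorem \ref{thm1}, producing the $h^{2-\epsilon}$ contribution in (\ref{final}).

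For the second term I would use the equivalences established earlier: by the exact DtN reduction (\ref{oedreduce1}) we have $p^{h}|_{\Omega_{b}}=p^{h}_{b}$, and by Theorem \ref{thm2} we have $p^{L}|_{\Omega_{b}}=p^{L}_{b}$. Hence $\|p^{h}-p^{L}\|_{L^{2}(\Omega_{b})}=\|p^{h}_{b}-p^{L}_{b}\|_{L^{2}(\Omega_{b})}\leq\|p^{h}_{b}-p^{L}_{b}\|_{H^{1}(\Omega_{b})}$ by the embedding $H^{1}\hookrightarrow L^{2}$. Since the estimate in Theorem \ref{thm3} holds pathwise (for each realization of $\dot{W}^{h}$) and the two exponential factors are deterministic, squaring and taking expectations gives
\begin{equation*}
E[\|p^{h}_{b}-p^{L}_{b}\|^{2}_{H^{1}(\Omega_{b})}]\leq C_{1}^{2}\big(e^{-C_{2}\int^{L}_{0}\tilde{\sigma}(x^{+}+s)ds}+e^{-C_{2}\int^{L}_{0}\tilde{\sigma}(x^{-}-s)ds}\big)^{2}\,E[\|p^{h}_{b}\|^{2}_{H^{1}(\Omega_{b})}].
\end{equation*}
Applying $(a+b)^{2}\leq 2a^{2}+2b^{2}$ to the bracket then produces exactly the two exponential contributions in (\ref{final}), provided the moment $E[\|p^{h}_{b}\|^{2}_{H^{1}(\Omega_{b})}]$ is finite and independent of $L$.

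The hard part will be establishing this moment bound $E[\|p^{h}_{b}\|^{2}_{H^{1}(\Omega_{b})}]\leq C$. I would derive it from the representation (\ref{gs1}): splitting $p^{h}_{b}=p^{h}|_{\Omega_{b}}$ into its deterministic part, controlled by $\|f\|_{L^{2}}$ through the well-posedness of Theorem \ref{t1}, and its stochastic part $\sum_{i}|K_{i}|^{-1/2}\xi_{i}\int_{K_{i}}G(\cdot,y)\,dy$. The independence of $\{\xi_{i}\}$ and the It\^o isometry, used exactly as in Theorem \ref{thm1}, annihilate the cross terms and give
\begin{equation*}
E[\|p^{h}_{b}\|^{2}_{H^{1}(\Omega_{b})}]\lesssim \|f\|^{2}_{L^{2}}+\sum_{i}\frac{1}{|K_{i}|}\Big\|\int_{K_{i}}G(\cdot,y)\,dy\Big\|^{2}_{H^{1}(\Omega_{b})}.
\end{equation*}
The remaining obstacle is purely deterministic: because $\nabla_{x}G$ carries a $|x-y|^{-1}$ singularity along the diagonal, each summand must be estimated by integrating this singularity against $K_{i}$. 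Here the decomposition of $\Phi$ in Lemma \ref{lem1} isolates the logarithmic kernel and leaves a Lipschitz remainder (the image terms in (\ref{gf}) being smooth on $\Omega_{b}$ away from $\Omega_{f}$), so that a Cauchy--Schwarz estimate over $K_{i}$, summed using the quasi-uniformity of $\mathcal{T}$, bounds the series in terms of $k$, $M$, $|\Omega_{b}|$ and $\sup_{i}|K_{i}|$. This is precisely the source of the dependence on $\sup_{i}|K_{i}|$ announced in the statement. Inserting this bound into the two displays above and renaming constants yields (\ref{final}).
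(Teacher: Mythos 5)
Your proposal is correct, and its skeleton is exactly the paper's proof: insert $p^{h}$, apply the triangle inequality with $(a+b)^2\leq 2a^2+2b^2$, identify $p^{h}|_{\Omega_b}=p^{h}_b$ and $p^{L}|_{\Omega_b}=p^{L}_b$ via the exact DtN reduction (\ref{oedreduce1}) and Theorem \ref{thm2}, bound the $L^2$-norm by the $H^1$-norm, and invoke Theorems \ref{thm1} and \ref{thm3}. Where you genuinely go beyond the paper is the moment factor: the paper applies the pathwise estimate (\ref{finalerror2}) inside the expectation and silently absorbs $E[\|p^{h}_b\|^{2}_{H^{1}(\Omega_b)}]$ into $C_1$, with no argument that this quantity is even finite, whereas you correctly isolate this as the missing step and propose a way to control it. Two remarks on that step. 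First, finiteness (and $L$-independence, which is automatic since $p^{h}_b$ does not involve $L$) follows more cheaply than your Green-function computation: the well-posedness of (\ref{oedreduce1}) (Theorem \ref{t1}) gives the pathwise stability bound $\|p^{h}_b\|_{H^{1}(\Omega_b)}\leq C\|f+\dot{W}^{h}\|_{L^{2}(\Omega_b)}$, and $E[\|\dot{W}^{h}\|^{2}_{L^{2}}]=N$, the number of elements, so the moment is bounded by a constant depending on $k$, $M$, $\|f\|_{L^2}$ and $\sup_i|K_i|$ (through $N\lesssim |\Omega_f|/\inf_i|K_i|$ and quasi-uniformity), which is all the theorem statement permits. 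Second, your finer route has a genuine obstruction you should be aware of: Cauchy--Schwarz applied to $\sum_i |K_i|^{-1}\bigl|\int_{K_i}\nabla_x G(x,y)\,dy\bigr|^2$ cannot produce an $h$-uniform constant, because $|\nabla_x G(x,y)|^2\sim |x-y|^{-2}$ is not integrable in two dimensions; the far-field elements contribute a factor of order $\log(1/h)$. This is harmless for the theorem as stated, since $C_1$ is allowed to depend on $\sup_i|K_i|$, but neither your argument nor the paper's yields a hidden moment that is bounded uniformly as $h\to 0$, and the claim should be phrased accordingly.
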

\begin{proof}
By the triangle inequality, it follows that
\begin{equation}\begin{split}
E[\|p-p^{L}\|^2_{L^2(\Omega_b)}]=&E[\|(p-p^{h})-(p^{h}-p^{L})\|^2_{L^2(\Omega_b)}]\\
\leq&2 E[\|p-p^{h}\|^2_{L^2(\Omega_b)}]+2E[\|p^{h}_b-p^{L}_b\|^2_{L^2(\Omega_b)}]\\
\leq&2 E[\|p-p^{h}\|^2_{L^2(\Omega_b)}]+2E[\|p^{h}_b-p^{L}_b\|^2_{H^1(\Omega_b)}].
\end{split}\end{equation}
Therefore, by Theorem \ref{thm1} and Theorem \ref{thm3}, the next inequality is proved:
\begin{equation}\begin{split}
E[\|p-p^{L}\|^2_{L^2(\Omega_b)}]\leq C_1(h^{2-\epsilon}+e^{-C_2 \int^{L}_{0}\tilde{\sigma}(s)ds}).
\end{split}\end{equation}
\end{proof}

Theorem \ref{thm4} asserts that the solution $p^{L}$ of the problem (\ref{woed2}) converges to $p$ in $\Omega_b$ as $L\rightarrow \infty$ and $h\rightarrow 0$. Therefore, we obtain an approximated solution of $p$ by solving the variational problem (\ref{woed2}).

\section*{Conclusion}
We have studied the stochastic convected Helmholz equation in an infinite duct. Since the regularity of the solution of SPDEs is generally weak, an alternative problem has been constructed by using the discretized white noise. In this setting, we have proposed the modified PML model which omits the presence of inverse upstream modes. Applying modal expansion approach, an error analysis of the PML model has been provided. Finally, the stochastic PML-truncation error of the solution has been established in the sense of  $E[\|\cdot\|^2_{L^2}]$.

\section*{Appendix}
${}$\\
{\bf The proof of Lemma \ref{lem2}}
\begin{proof}
By (\ref{gf}) and (\ref{g1}), we obtain
\begin{equation}\begin{split}
|G(x,y)-G(x,z)|\leq& 2|\Phi(x,y)-\Phi(x,z)|+\sum^{\infty}_{n=1}|\Phi(x,e^{+}_{n}+y)-\Phi(x,e^{+}_{n}+z)|\\
&+\sum^{\infty}_{n=1}|\Phi(x,e^{-}_{n}+y)-\Phi(x,e^{-}_{n}+z)|\\
&+\sum^{\infty}_{n=1}|\Phi(x,e^{+}_{n}-y)-\Phi(x,e^{+}_{n}-z)|\\
&+\sum^{\infty}_{n=1}|\Phi(x,e^{-}_{n}-y)-\Phi(x,e^{-}_{n}-z)|\\
=:& \, 2I_0+I_{1}+I_{2}+I_{3}+I_{4}.
\end{split}\end{equation}
To prove the inequality (\ref{ien1230}), we will derive the following inequalities:
\begin{equation}\begin{split}
\int_{\Omega_b}I^2_i\;dx&\leq C |y-z|^{2-\epsilon},\;\;\;\;\;i=0,1,2,3,
\end{split}\end{equation}
where $C$ is a positive constant depending on $M$ and $k$.

First, the case of $I_0$ is considered. From Lemma \ref{lem1} and the boundedness of the domain $\Omega_b$, it follows that
\begin{equation}\begin{split}
\int_{\Omega_b}I^2_0 \;dx&\leq C(\int_{\Omega_b}|\ln{\rho(x-y)}-\ln{\rho(x-z)}|^2 \;dx+\int_{\Omega_b}|\tilde{V}(x,y)-\tilde{V}(x,z)|^2 \;dx)\\
&\leq C\int_{\Omega_b}|\ln{\rho(x-y)}-\ln{\rho(x-z)}|^2 \;dx+C|y-z|^2.
\end{split}\end{equation}
For $0<\epsilon<1$, the integral $\int_{\Omega_b}|\ln{\rho(x-y)}-\ln{\rho(x-z)}|^2 \;dx$ is reformulated by
\begin{equation}\begin{split}\label{eq001}
\int_{\Omega_b}|\ln{\rho(x-y)}-\ln{\rho(x-z)}|^{\epsilon}(\rho(x-y)-\rho(x-z))^{2-\epsilon}\left(\int^{1}_{0}\frac{1}{\theta \rho(x-y)+(1-\theta)\rho(x-z)}d\theta\right)^{2-\epsilon} \;dx.
\end{split}\end{equation}
By the triangular inequality and H\"older inequality,  (\ref{eq001}) is bounded by
\begin{equation}\begin{split}\label{sep1}
C&|y-z|^{2-\epsilon}\int_{\Omega_b}|\ln{\rho(x-y)}-\ln{\rho(x-z)}|^{\epsilon}\left(\int^{1}_{0}\frac{1}{\theta \rho(x-y)+(1-\theta)\rho(x-z)}d\theta\right)^{2-\epsilon} \;dx\\
&\leq C|y-z|^{2-\epsilon}\int_{\Omega_b}|\ln{\rho(x-y)}-\ln{\rho(x-z)}|^{\epsilon}\left(\frac{1}{\rho(x-y)}+\frac{1}{\rho(x-z)}\right)^{2-\epsilon} \;dx\\
&\leq C|y-z|^{2-\epsilon}\int_{\Omega_b}|\ln{\rho(x-y)}-\ln{\rho(x-z)}|^{\epsilon}\left(\frac{1}{|x-y|}+\frac{1}{|x-z|}\right)^{2-\epsilon} \;dx\\
&\leq C|y-z|^{2-\epsilon}\left(\int_{\Omega_b}|\ln{\rho(x-y)}-\ln{\rho(x-z)}|^{3}dx\right)^{\epsilon/3}\left(\int_{\Omega_b}|\frac{1}{|x-y|}+\frac{1}{|x-z|}|^{\frac{3(2-\epsilon)}{3-\epsilon}}dx\right)^{(3-\epsilon)/3}.
\end{split}\end{equation}
Since $\rho(x-y)$ and $\rho(x-z)$ are bounded by some $\tilde{C}>0$ for $x,y,z\in \Omega_b$, the integral $\int_{\Omega_b}|\ln{\rho(x-y)}-\ln{\rho(x-z)}|^{3}dx$ has an upper bound
\begin{equation}\begin{split}\label{sep2}
C\int_{\Omega_b}|\ln{\rho(x-y)}|^{3}dx\leq C\int^{\tilde{C}}_{0}r\ln^3 r \;dr< \infty.
\end{split}\end{equation}
Moreover, the integral $\int_{\Omega_b}|\frac{1}{|x-y|}+\frac{1}{|x-z|}|^{\frac{3(2-\epsilon)}{3-\epsilon}}dx$ is also bounded by
\begin{equation}\begin{split}\label{sep3}
C\int_{\Omega_b}|\frac{1}{|x-y|}|^{\frac{3(2-\epsilon)}{3-\epsilon}}dx\leq C\int^{\tilde{C}}_{0}r^{-\frac{3-2\epsilon}{3-\epsilon}} dr < \infty.
\end{split}\end{equation}
By (\ref{sep1}), (\ref{sep2}), and (\ref{sep3}), we conclude that
\begin{equation}\begin{split}\label{i01}
\int_{\Omega_b}I^2_0 \;dx&\leq C |y-z|^{2-\epsilon},
\end{split}\end{equation}
where $C$ is a positive constant depending on $M$ and $k$.

On the other hand, Hankel function $H^{1}_0(z)$ has an asymptotic behavior
\begin{equation}\begin{split}\label{hanasymp}
H^{1}_{0}(z)=\sqrt{\frac{2}{\pi z}}e^{i(z-\pi/4)}(1+\mathcal{O}(\frac{1}{z}))
\end{split}\end{equation}
for large real $z$. To prove $\int_{\Omega_b}I^2_1\;dx\leq C |y-z|^{2-\epsilon}$, we will use (\ref{hanasymp}) in the following. We choose a large $n_0>0$ such that $\rho(x-y-e^{+}_{n})$ and $\rho(x-z-e^{+}_{n})$ have a lower bound $\tilde{C}>0$.  Then, $\Phi(x,e^{+}_{n}+y)$ and $\Phi(x,e^{+}_{n}+z)$ have an asymptotic behavior (\ref{hanasymp}) for all $n\geq n_0$. Choosing sufficiently large $n_0$, it follows that
\begin{equation}\begin{split}\label{i12}
|\Phi&(x,e^{+}_{n}+y)-\Phi(x,e^{+}_{n}+z)|\leq|H^{1}_{0}(k\rho(x-y-e^{+}_{n}))-H^{1}_{0}(k\rho(x-z-e^{+}_{n}))|\\
&\leq C |\frac{1}{\sqrt{\rho(x-y-e^{+}_{n})}}-\frac{1}{\sqrt{\rho(x-z-e^{+}_{n})}}|\\
&\leq C \frac{|\rho(x-y-e^{+}_{n})-\rho(x-z-e^{+}_{n})|}{\sqrt{\rho(x-y-e^{+}_{n})\rho(x-z-e^{+}_{n})}(\sqrt{\rho(x-y-e^{+}_{n})}+\sqrt{\rho(x-y-e^{+}_{n})})}\\
&\leq C \frac{|\rho(y-z)|}{n^3 d^3}\\
&\leq C \frac{|y-z|}{n^3 d^3},\\
\end{split}\end{equation}
where $C$ is a positive constant depending on $M$ and $k$. The inequality (\ref{i12}) implies that
\begin{equation}\begin{split}\label{i13}
\sum^{\infty}_{n=n_0}|\Phi(x,e^{+}_{n}+y)-\Phi(x,e^{+}_{n}+z)|&\leq \sum^{\infty}_{n=n_0}\frac{C}{n^3 d^3}|y-z|\leq C |y-z|.
\end{split}\end{equation}
For $n< n_0$, the same procedure as in the case of $I_0$ is available. Hence, we obtain
\begin{equation}\begin{split}\label{i11}
\int_{\Omega_b}|\Phi(x,e^{+}_{n}+y)-\Phi(x,e^{+}_{n}+z)|^2 \;dx&\leq C |y-z|^{2-\epsilon}, \;\;\;\;\text{for all $n< n_0$}.
\end{split}\end{equation}
By (\ref{i13}) and (\ref{i11}), for small $h>0$, it holds that
\begin{equation}\begin{split}\label{i1}
\int_{\Omega_b}I^2_1 \;dx&\leq C_1\sum^{n_0-1}_{n=1}\int_{\Omega_b}|\Phi(x,e^{+}_{n}+y)-\Phi(x,e^{+}_{n}+z)|^2 dx\\
&\;\;\;\;+C_2\int_{\Omega_b}|\sum^{\infty}_{n=n_0}(\Phi(x,e^{+}_{n}+y)-\Phi(x,e^{+}_{n}+z))|^2 dx\\
&\leq C_1 |y-z|^{2-\epsilon}+ C_2\int_{\Omega_b}|y-z|^2 dx\\
&\leq C |y-z|^{2-\epsilon},
\end{split}\end{equation}
where $y,z\in K_i$ and $C$ is a positive constant depending on $k$, $d$, $|\Omega_b|$, and $M$. $I_{2,3,4}$ cases are proved by the same method as in the case of $I_1$.

\end{proof}



\bibliographystyle{elsarticle-num}


\end{document}